\newtheorem{thm}{Theorem}
\newtheorem{lemma}[thm]{Lemma}
\newtheorem{prop}[thm]{Proposition}
\newcommand{\cH}{{\mathcal H}}
\newcommand\N{\mathbb{N}}
\newcommand\R{\mathbb{R}}
\newcommand\x{\mathbf{x}}
\newcommand{\bx}{\mathbf{x}}
\newcommand{\1}{\mathbf{1}}
\DeclareMathOperator{\aff}{aff}
\DeclareMathOperator{\myspan}{span}
\DeclareMathOperator{\conv}{conv}
\DeclareMathOperator{\lin}{lin}
\def\blfootnote{\gdef\@thefnmark{}\@footnotetext}
\begin{document}

 \title{Rotational Crofton Formulae with a Fixed Subspace}

\author{Emil Dare$^{1,*}$\footnote{∗Corresponding author
Email addresses: dare@math.au.dk (Emil Dare ), kiderlen@math.au.dk (Markus Kiderlen)}, Markus Kiderlen$^{1}$  \\
        \small $^{1}$\emph{Department of Mathematics, Ny Munkegade 118, 8000, Aarhus C, Denmark} 
        }

\maketitle
\begin{abstract}
The classical Crofton formula explains how intrinsic volumes of a convex body $K$ in $n$-dimensional Euclidean space can be obtained from  integrating a measurement function at sections of $K$ with invariantly moved affine flats. 
Motivated by stereological applications, we present variants of Crofton's formula, where the flats are constrained to contain a fixed linear subspace $L_0$, but are otherwise invariantly rotated. This main result generalizes a known rotational Crofton formula, which only covers the case $\dim L_0=0$.  The proof combines a suitable  Blaschke--Petkantschin formula with the classical Crofton formula.
We also argue that our main result is best possible, in the sense that one cannot estimate intrinsic volumes of a set, 
based on lower-dimensional sections, 
other than those given by our result. 
Finally, we provide a proof for a well-established variant: an integral relation for  vertical sections.  Our formula is stated for  intrinsic volumes of a given set, complementing the  classical approach for Hausdorff measures. 
\\
\ \\
\emph{Keywords : Blaschke–Petkantschin formulae, Convex geometry, Crofton formula,
Integral geometry, Intrinsic volume, Rotational integral
}
\end{abstract}

\chapter{Introduction}
\blfootnote{ *Corresponding author \\
 Email addresses: dare@math.au.dk (Emil Dare), kiderlen@math.au.dk (Markus Kiderlen)}
The classical Crofton formula expresses an intrinsic volume of a convex body $K$ as 
invariant integral of another intrinsic volume of the intersection of $K$ with affine subspaces.
More specifically,  for a set $K$ in the family $\mathcal K^n$ of convex bodies (nonempty, compact, convex subsets) of $\R^n$,  Crofton's intersection formula (\cite[eq.~(4.59)]{SchneiderRoed}) states 
\begin{equation}\label{eqNy1}
    \int_{A(n,k)} \varphi(K\cap E) \, \mu_k (dE)     = V_{m}(K), 
\end{equation}
for $k \in \{0,...,n\}$ and $m\in \{n-k,\ldots,n\}$, where $A(n,k)$ is the family of $k$-dimensional affine subspaces (flats) in $\mathbb R^n$, $\mu_k$ is a motion invariant measure on that space, and $\varphi(K\cap E)$ is proportional to 
$V_{m+k-n}(K\cap E)$.  The functionals $V_j:\mathcal K^n\to \mathbb R$  for 
$j=0,\ldots,n$ appearing in \eqref{eqNy1}, are the \emph{intrinsic volumes} usually defined as polynomial coefficients in \emph{Steiner's formula} (\cite[Thm.~3.10]{LectureNotesConvexGeometry}), comprising 
$V_n(K)$, which is the ordinary volume, $V_{n-1}(K)$, which is proportional to the surface area  and 
 $V_1(K)$ being proportional to the mean width of $K$. We recommend the monograph \cite{SchneiderRoed} as an excellent reference for convex geometric notions and results. 
 
 The measure $\mu_k$ is not finite, but its restriction to the family $A_{K'}$  of flats hitting a compact reference set $K'\supset K$ is. Due to this fact, \eqref{eqNy1} can be used to obtain unbiased estimates of $V_m(K)$ from $K\cap E$, where $E\in A_{K'}$ is invariantly randomized. This has been used extensively in 
 stereology, see \cite{StereologyStat} and the references therein.

 However, in particular in biological applications, it is sometimes more convenient not to randomize over all flats in $A_{K'}$, but only over flats containing a fixed point, usually thought of as the origin. This led to the branch of \emph{local stereology} (see \cite{EvaLocalStereology}) and estimators such as the \emph{nucleator }\cite{Nucleator} and the 
 \emph{rotator} \cite{EstRotator}. Although historically earlier, these estimators are consequences of an underlying integral formula,  the so-called \emph{rotational Crofton formula}, derived independently in \cite{EvaAuneau2010} and \cite{GUALARNAU2010}. It is a variant of the classical Crofton formula and reads
 \begin{equation}\label{eqNy2}
    \int_{G(n,k)} \varphi_L(K\cap L)\,  \nu_k (dL)     = V_{m}(K), 
\end{equation}
for $k \in \{1,...,n-1\}$ and $m\in \{n-k+1,\ldots,n\}$,
where $\nu_k$ is the \emph{rotation} invariant measure on the Grassmannian $G(n,k)$ of $k$-dimensional \emph{linear} subspaces of $\R^n$. The measurement function $\varphi_L(K\cap L)$ is an explicitly known function of $K\cap L$ and $L$, but it is no longer proportional to an intrinsic volume of $K\cap L$.

Some stereological applications require even more constraints on the intersecting planes. For instance, 
in \cite{EstRotator} the volume of a three-dimensional object
is estimated
from sections with ordinary planes, that all 
contain a given line $L_0$. The corresponding estimator is called \emph{vertical rotator}. The practical implementation of this estimator 
for stereological applications can be found in \cite{CruzOrive87b} (not to be confused with \cite{CruzOrive87a}, which gives early historical notes revolving around stereology).
To state the underlying Crofton-type formula in $\R^n$ with a general fixed subspace $L_0\in G(n,r)$, 
$r\in \{0,\ldots,n-1\}$, and for general intrinsic volumes, is the main purpose of this paper. The result reads
\begin{equation}\label{eqNy3}
    \int_{G(L_0,k)} \varphi_L^{L_0}(K\cap L)\, \nu_k^{L_0} (dL)= V_{m}(K), 
\end{equation}
for $k \in \{r+1,...,n\}$ and $m\in \{n+r-k+1,\ldots,n\}$. 
Here $G(L_0,k)\subset G(n,k)$ is the family of all $k$-dimensional linear subspaces containing $L_0$, and $\nu_k^{L_0}$ is the probability measure that is invariant under all rotations fixing $L_0$. 
The new measurement function $\varphi_L^{L_0}(K\cap L)$ is explicitly calculated and given in the main Theorem \ref{thmGeoRotationalCroften} below. 
Clearly \eqref{eqNy3} reduces to the rotational Crofton formula \eqref{eqNy2} if $r=0$, that is, if $L_0=\{o\}$. It is therefore not surprising that the method of proof for \eqref{eqNy3} is a generalization of 
the one for \eqref{eqNy2}. The idea of the latter can best be explained in stereological terms: 
A suitable Blaschke--Petkantschin formula  (\cite[p.~285]{GulBog}) allows generating a random $q$-dimensional flat $E$, $q=\{k-(n-m),\ldots,k-1\}$, in the isotropic subspace $L$ in such a way that the distribution of $E$ is motion invariant in $\R^n$. As $K\cap L\cap E=K\cap E$, the classical Crofton formula can thus be used to 
obtain explicit measurement functions in \eqref{eqNy2}. As $q$ may vary, one obtains $(k-1)-(k-(n-m))+1=n-m$
potentially different measurement functions, which, however, turn out to coincide when $L_0=\{o\}$. 
Our proof of \eqref{eqNy3} proceeds along the same lines, but with a more general Blaschke--Petkantschin formula (Theorem \ref{thm:BP} below). 
We will again see that several potentially different measurement functions can be obtained depending on the parameter $q$. Extending the mentioned uniqueness for $L_0=\{o\}$, 
Theorem \ref{Thm:Uniqueness} states that they all coincide also in the general case. It should be noted that the special case $n=m=3$ of  \eqref{eqNy3} was already derived in  \cite{EstRotator} with a proof based on the Pappus-Guldinus theorem.

Note that \eqref{eqNy3} is different from the well-established stereological  concept of \emph{vertical sections}, where averages are taken over \emph{affine planes} of given dimension that are \emph{parallel} to a fixed subspace $L_0\in G(n,r)$, $r\in \{1,\ldots,n-1\}$. In our notation, the integral formula underlying this concept is 
\begin{equation}\label{eqNy4}
    \int_{G(L_0,k)} \int_{L^\perp}\tilde \varphi_L^{L_0}\big(K\cap (L+x)\big)\,\lambda_{L^\perp}(dx) \,\nu_k^{L_0} (dL)= V_{m}(K), 
\end{equation}
for $k\in \{r+1,\ldots n\}$ and $m\in \{ n+r-k,\ldots n\}$,  where $\lambda_{L^\perp}$ is the Lebesgue-measure on the orthogonal complement $L^\perp$ of $L$. 
   Baddeley \cite{Baddeley83} used this concept to estimate surface area of an object in $\mathbb{R}^3$ based on vertical sections parallel to a fixed line ($r=1$). A practical example involving a Paddington bear can be found in \cite{Baddeley86}.  He extended this idea in 
 \cite{Baddeley84} to arbitrary dimensions $n$ and $r$ for Hausdorff measures of rectifiable sets (\cite[3.2.14]{Federer69}) with a proof based on the coarea formula.  
 For more information about vertical sections in stereology we recommend consulting the book 
 \cite{StereologyStat} or the recent overview on vertical sections in \cite{EvaOverViewStereologi}. To the best of our knowledge, the vertical section formula \eqref{eqNy4} has not been stated for intrinsic volumes in the literature. We therefore give the details of an independent proof based on a Blaschke--Petkantschin-type  result in Appendix A, see Theorem \ref{thmGeoAffineRotationalCroften}.

This paper is organized as follows. In Section 2 we recall preliminaries and  basic notation. In Section 3 we derive the main result, Theorem \ref{thmGeoRotationalCroften}, a rotational  Crofton formula with a fixed subspace. This requires a suitable Blaschke--Petkantschin formula, which will be stated in Theorem \ref{thm:BP}. Section 
\ref {sec:measureFct} collects basic properties and simplified representations  of the measurement functions, including the aforementioned proof of independence from the parameter $q$. Section \ref{sec:limitations} shows that 
no other intrinsic volumes than those 
described in Theorem \ref{thmGeoRotationalCroften} can be obtained 
from section profiles under the given design. Hence, our results cannot be extended using different methods.  The paper is supplied with an appendix discussing the  vertical section formula \eqref{eqNy4}.

\chapter{Notation and preliminaries}
For a set $A \subseteq \mathbb{R}^n$ we let $\myspan(A)$,  $\aff(A)$ 
and $\conv A=\conv(A)$ denote the linear, the affine and the convex hull of $A$, respectively. The set $A^\perp:=\{x\in \R^n: \langle x,y\rangle=0 \text{ for all } y\in A\}$ is the orthogonal complement of $\myspan A$.
We will write $\mathcal{H}^{d}$ for the $d$-dimensional Hausdorff measure on $\mathbb{R}^n$ for $d=0,\ldots,n$, see e.g.~\cite{KonstruktionHausdorffRoger}. We let  $\kappa_n= \cH^n(B^n)= \frac{\pi^{n/2}}{\Gamma(1+n/2)}$ be the volume of the Euclidean unit ball $B^n$  in $\mathbb{R}^n$ and $\omega_n= 
\cH^{n-1}(S^{n-1})=n \kappa_{n}$ be the surface area of the Euclidean unit sphere $S^{n-1}:=\{x \in \mathbb{R}^n \,: \,\|x\|=1 \}$. Throughout the paper we will use the following decomposition 
(\cite[p.~1]{SphericalIntegrationFormula}) of the restriction of $\cH^{n-1}$ to the Borel sets of the unit sphere, which can be thought of as cylindrical coordinates on the sphere: 
 \begin{equation}\label{eqGEOSphericalFormula}
     \int_{S^{n-1}} f(u) \, \mathcal{H}^{n-1}(d u)
     =
     \int_{S^{n-1}\cap v^\perp}\int_{-1}^1 f(tv+\sqrt{1-t^2}w) (1-t^2)^{\frac{n-3}{2}} \, dt \, \mathcal{H}^{n-2}(dw)
 \end{equation}
Here, $v\in S^{n-1}$ is a fixed unit vector and $f:S^{n-1}\to [0,\infty)$ is measurable.

We will follow the notation in \cite[Chpt.~13]{GulBog} for spaces of flats and subspaces in $\mathbb{R}^n$,
$n \in \mathbb{N}$. For $k \in \{0,\ldots n\}$ we let $A(n,k)$ denote the family of \emph{affine} $k$-dimensional flats of $\mathbb{R}^n$ and we let $G(n,k)$ be the \emph{Grassmannian} of $k$-dimensional \emph{linear} subspaces of $\mathbb{R}^n$. As we will work with flats and subspaces incident with others, we define for 
a fixed $L \in G(n,p)$, $p \in \{0,\ldots,n\}$, the spaces
\begin{align*}
    G(L,k):=& \begin{cases}
     \{ L' \in G(n,k) \,: \,L' \subseteq L \}, \text{ if } k \leq p,
     \\
     \{ L' \in G(n,k) \,: \,L \subseteq L' \}, \text{ if } k > p,
    \end{cases}
\end{align*}
and, similarly,  for $E \in A(n,p)$
\begin{align*}
    A(E,k):=& \begin{cases}
     \{ E' \in A(n,k) \,: \,E' \subseteq E \}, \text{ if } k \leq p,
     \\
     \{ E' \in A(n,k) \,: \,E \subseteq E' \}, \text{ if } k > p.
    \end{cases}
\end{align*}
To avoid degenerate situations, we assume throughout this paper that the dimension $n$ of the ambient space is at least 3.
The unique invariant probability measure on $G(L,k)$ will be denoted by $\nu_k^L$. Its construction is outlined in \cite[p.~590]{GulBog}. Invariance is understood here with respect to all rigid rotations that keep $L$ fixed.
Similarly, there is a measure $\mu^E_k$ on $A(E,k)$, which is invariant under all rigid rotations and translations fixing $E$. It is unique up to normalization, which we choose as in \cite{GulBog}. More specifically, if $k\le p$ and 
$E=L+x$ with $L\in G(n,p)$ and $x\in \R^n$, say, then
\begin{align}\label{eqGEOAffineIntegrationToLinearplusShift}
  \int_{A(E,k)} f(E') \,\mu_k^{E}(dE')=
    \int_{G(L,k)}\int_{M^\perp\cap E} f(M+x+y) \,\lambda_{M^\perp\cap E}(dy) \,\nu_k^L (dM)
\end{align}
for any measurable function $f\ge0$; cf.~\cite[eq.~(13.13)]{GulBog}.
Here, and later on,  $\lambda_F$ denotes the $q$-dimensional Lebesgue measure on a flat $F\in A(n,q)$. 
When the flat is clear from the context, we also use the notation $\lambda_q:=\lambda_F$.

In the special case when $p=n$ we notice that $G(L,k)=G(n,k)$ and $A(E,k)=A(n,k)$. We will write $\nu_k$ for the invariant probability measure on $G(n,k)$ and $\mu_k$ for the invariant measure on $A(n,k)$. 
We will use the notation $p(x|E)$ for the orthogonal projection of $x\in \R^n$ on the flat $E\subset \R^n$.

To simplify notation, we use boldface letters to denote vectors of vectors. For instance, we write $\x=(x_1,...,x_q) \in (\mathbb{R}^n)^{q}$ for the $q$-tuple of $n$-dimensional vectors, $q \in \{1,...,n \}$. 

Let   \[
P( \x ) =P(x_1,\ldots,x_q)=\Big\{\sum_{i=1}^q \alpha_i x_i \,: 0\leq \alpha_i\leq 1,\,\,i=1,\ldots,q\Big\}
	 \] denote the parallelepiped spanned by 
	$x_1,\ldots,x_q\in \R^n$. The $q$-dimensional volume of $P(\x)$ is denoted by $\nabla_q(\x )$. 
	
	For $\x \in (\mathbb{R}^n)^q$ the $(q-1)$-dimensional volume $\Delta_{q-1}(\x)$ 
	of  $\conv\{x_1,\ldots,x_q\}$  obeys
\begin{equation}\label{eqGeoDeltaNabla}
    \Delta_{q-1}(x_1,\ldots,x_{q})= \frac{1}{(q-1)!} \nabla_{q-1}(x_2-x_1,\ldots,x_q-x_1),
\end{equation}
 see \cite[eq.~(7.6)]{GulBog}.
 We have
\begin{align}
    \nabla_{q+1}(x_1,\ldots,x_{q+1})=
    \nabla_{q}(x_1,\ldots,x_{q})d(x_{q+1},L),
    \label{eq:Gul7.9}\end{align}
    where $x_{q+1}\in \R^n$ and $L\in G(n,q)$ is a linear space containing $x_1,\ldots,x_{q}$, see \cite[eq.~(7.9)]{GulBog}. 
 Extending these definitions, let 
 \begin{equation}\label{eq:defNablaxM}
     \nabla_{q,r}(\x,M):= \nabla_{q+r}(x_1,\ldots, x_q,u_1, \ldots u_r),
 \end{equation}
 where $(u_1,\ldots ,u_r)$ forms an orthonormal basis of $M \in G(n,r)$. If, in addition, $L \in G(n,k)$ with $r+k \leq n$ is given, the \emph{subspace determinant} of $L$ and $M$ is given as $[L,M]:= \nabla_{k,r}(\textbf{v},M)$, where $\textbf{v}=(v_1,\ldots,v_k)$ forms an orthonormal basis of $L$. In the special cases 
 $M=\{o\}$ or $L=\{o\}$, we define $[M,L]:=1$.
 If $\dim L+\dim M=n$ and $L \cap M = \{o \}$, the subspace determinant 
 is equal to the factor by which the $(\dim M)$-dimensional Lebesgue measure is multiplied under the orthogonal projection from $M$ to $L^\perp$, see \cite[p.~598]{GulBog}. Hence, $\int_{\R^n}f(z)\lambda_n(dz)$ coincides with
\begin{equation}\label{eq0proj}
    \int_{L}\int_{L^\perp} f(x+y)\lambda_{L^\perp}(d x)\lambda_L(dy)
    =[L,M]\int_{L}\int_{M} f(x+y)\lambda_{M}(d x)\lambda_{L}(dy)
	\end{equation}
	 for any measurable function $f:\R^d\to [0,\infty)$. 
	 \medskip 

For $q,r\in \{0,\ldots,n-1\}$ with $q+r\le n-1$ let
\begin{equation}
    \label{eq:DefD}
    D(E,L):=d(o,E)[\myspan E,L], \qquad E\in A(n,q), L\in G(n,r),
\end{equation}
where   $d(o, E)$ is the Euclidean distance of $o$ to $E$. This quantity will play an important role later on, and we remark some of its properties in the following lemma. 

\begin{lemma}\label{lemD(E,L)}
Let $L\in G(n,r)$ and $E\in A(n,q)$ with $q+r\le n-1$ be given. 
\begin{itemize}
    \item[(i)] If $E=\aff\{x_0,\ldots,x_q\}\in A(n,q)$ with $\x=(x_0,\ldots,x_q)\in (\R^n)^q$ then 
\begin{equation}\label{eqGeoNablaSubspace}
D(E,L)=\frac1{q!}\frac{\nabla_{q+1,r}(\bx,L)}{\Delta_q(\bx)}.
\end{equation}
    \item[(ii)] If $E=M+z$, $M\in G(n,q)$, then 
\begin{equation}\label{eq:DefDnew}
D(E,L)=d(E,L)[M,L],
\end{equation}
where $d(E,L)$ is the Euclidean distance between $E$ and $L$. 
\item[(iii)] If $E=M+z, M \in G(n,q), z \in M^\perp$, 
$q+r\leq n-2$ and $u\in S^{n-1} \cap M^\perp$, then 
			\begin{equation}\label{eqOneDimMore}
				D(E+\myspan\{u\},L)=D(E,L) \,
				\big\|p\big(u\big|(L+M+z)^\perp\big)\big\|.
			\end{equation}
\end{itemize} 
\end{lemma}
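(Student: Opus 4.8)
The plan is to treat part (ii) as the central identity, to derive it from the definition \eqref{eq:DefD} together with an auxiliary ``one extra orthogonal direction'' rule for subspace determinants, and then to obtain (i) and (iii) from it. The auxiliary rule I would record first: for $M\in G(n,q)$, $L\in G(n,r)$ with $q+r\le n-1$ and $\hat z\in S^{n-1}\cap M^\perp$,
\[
[M\oplus\myspan\{\hat z\},L]=[M,L]\,\big\|p\big(\hat z\,|\,(M+L)^\perp\big)\big\|.
\]
This is immediate from \eqref{eq:Gul7.9}: writing the left-hand side as $\nabla_{q+1+r}(v_1,\dots,v_q,\hat z,u_1,\dots,u_r)$ for orthonormal bases $v_1,\dots,v_q$ of $M$ and $u_1,\dots,u_r$ of $L$, I reorder the columns to place $\hat z$ last and peel it off, the remaining factor being $[M,L]$ and the height being $d(\hat z,M+L)=\|p(\hat z|(M+L)^\perp)\|$.

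For (ii) I may assume $z\in M^\perp$ (replace $z$ by $p(z|M^\perp)$, which leaves $E$ unchanged); the case $z=o$ is trivial since both sides vanish. For $z\ne o$ set $\hat z=z/\|z\|$, so that $d(o,E)=\|z\|$ and $\myspan E=M\oplus\myspan\{\hat z\}$. The definition \eqref{eq:DefD} and the auxiliary rule then give $D(E,L)=\|z\|[M,L]\|p(\hat z|(M+L)^\perp)\|=[M,L]\|p(z|(M+L)^\perp)\|$, and it remains to identify $\|p(z|(M+L)^\perp)\|$ with $d(E,L)$. This holds because the displacement set $E-L$ equals $(M+L)+z$, whose distance to $o$ is exactly the norm of the component of $z$ orthogonal to $M+L$.

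Part (i) I would prove by peeling vertices off the parallelepiped. Starting from $\nabla_{q+1,r}(\bx,L)=\nabla_{q+1+r}(x_0,\dots,x_q,u_1,\dots,u_r)$, the column operations $x_i\mapsto x_i-x_0$ ($i\ge1$) leave $\nabla$ invariant; reordering to put $x_0$ last and applying \eqref{eq:Gul7.9} extracts the height $d(x_0,M+L)$, where $M=\myspan\{x_1-x_0,\dots,x_q-x_0\}$ is the direction space of $E$. The leftover $\nabla_{q+r}(x_1-x_0,\dots,x_q-x_0,u_1,\dots,u_r)$ equals $\nabla_q(x_1-x_0,\dots,x_q-x_0)\,[M,L]$ (subtract the $M$-components of the $u_j$ and split the volume over the orthogonal subspaces $M$ and $M^\perp$), while by \eqref{eqGeoDeltaNabla} the first factor is $q!\,\Delta_q(\bx)$. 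Dividing, the right-hand side of \eqref{eqGeoNablaSubspace} collapses to $[M,L]\,d(x_0,M+L)=[M,L]\,d(E,L)$, which equals $D(E,L)$ by part (ii).

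Finally, for (iii) I would apply (ii) to both $E$ and $E+\myspan\{u\}=(M\oplus\myspan\{u\})+z$, and the auxiliary rule to $[M\oplus\myspan\{u\},L]$, which reduces the claim to the metric identity
\[
\big\|p(u|(M+L)^\perp)\big\|\,\big\|p(z|(M+\myspan\{u\}+L)^\perp)\big\|
=\big\|p(z|(M+L)^\perp)\big\|\,\big\|p(u|(L+M+z)^\perp)\big\|.
\]
The main obstacle here is the bookkeeping of nested orthogonal complements. The clean route is to project everything into $V:=M^\perp$ and then into $V':=(M+L)^\perp$ (the complement being taken inside $V$): since $u,z\in V$, one checks that $(M+L)^\perp=V'$, that $(M+\myspan\{u\}+L)^\perp$ is $\tilde u^{\perp}\cap V'$, and that $(L+M+\myspan\{z\})^\perp$ is $\tilde z^{\perp}\cap V'$, where $\tilde u=p(u|V')$ and $\tilde z=p(z|V')$. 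The displayed identity then becomes $\|\tilde u\|\,\|p(\tilde z|\tilde u^{\perp}\cap V')\|=\|\tilde z\|\,\|p(\tilde u|\tilde z^{\perp}\cap V')\|$, and both sides equal the parallelogram area $\nabla_2(\tilde u,\tilde z)$ by \eqref{eq:Gul7.9} together with its symmetry. The degenerate case $d(E,L)=0$, i.e.\ $\tilde z=o$, is handled separately, both sides then being $0$.
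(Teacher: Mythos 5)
Your proposal is correct, but it organizes the lemma differently from the paper and uses different tools, so a comparison is worthwhile. The paper proves (i) \emph{first}, directly from the definition \eqref{eq:DefD}: the key identity $\nabla_{q+1,r}(\x,L)=[L,M]\,\nabla_{q+1}(\x)$ (with $M=\myspan E$) is obtained from the measure-theoretic factorization \eqref{eq0proj} of Lebesgue measure, combined with the pyramid-volume computation $\nabla_{q+1}(\x)=q!\,\Delta_q(\x)\,d(o,E)$; part (ii) is then deduced \emph{from} (i) by applying it to the points $z,z+u_1,\ldots,z+u_q$. You invert this: your auxiliary peeling rule $[M\oplus\myspan\{\hat z\},L]=[M,L]\,\|p(\hat z\,|\,(M+L)^\perp)\|$ (which is exactly \eqref{eqNewF}, used by the paper only later in the appendix) gives (ii) straight from the definition, and (i) then follows by shear invariance of Gram determinants and the splitting $\nabla_{q+r}=\nabla_q\cdot[M,L]$ --- a purely linear-algebraic substitute for \eqref{eq0proj}. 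The divergence is greatest in (iii): the paper argues from the definition via the explicit orthonormal basis $u_1,\ldots,u_q,\,z/\|z\|,\,p(u|z^\perp)/\|p(u|z^\perp)\|$ of $M+\myspan\{z,u\}$ and the identity $d(z,M+\myspan\{u\})=\|z\|\,\|p(u|z^\perp)\|$, whereas you apply (ii) twice and reduce everything to the symmetric fact that both sides equal the parallelogram area $\nabla_2(\tilde u,\tilde z)$ inside $(M+L)^\perp$, which makes transparent why $u$ and $z$ play interchangeable roles (and handles the paper's separately treated case $z\in\myspan\{u\}$ uniformly). Each route has its merits: yours is self-contained, avoids the measure-theoretic step, and unifies the lemma with \eqref{eqNewF}; the paper's obtains (i) --- the form actually invoked in Theorem \ref{thm:BP} --- without a detour through (ii), and its proof of (iii) needs less bookkeeping. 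For completeness you should state the trivial degenerate cases your reductions pass over ($[M,L]=0$ before cancelling it in (iii), and $\tilde u=o$ alongside $\tilde z=o$), though in each of these both sides of \eqref{eqOneDimMore} vanish, so nothing breaks.
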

\begin{proof}
We start with the proof of (i).
The claim is trivial for $r=0$, so we may assume that ${\mathbf v}=(v_1,\ldots,v_r)$ is an orthonormal basis of $L$. Define  $M:=\myspan E=\myspan\{x_0,\ldots,x_q\}$. If $M\cap L\ne \{o\}$, both sides of \eqref{eqGeoNablaSubspace} vanish, so we may also assume 
$M\cap L= \{o\}$, implying $P(x_0,\ldots,x_q,v_1,\ldots,v_r)=P(\bx)+P(\mathbf v)$. 
This, the definition \eqref{eq:defNablaxM}  and relation \eqref{eq0proj} applied in $M+L$ yield \begin{align*}
\nabla_{q+1,r}(\x, L)
&=
 \int_{\R^{q+1+r}} 1_{P(\x)+P(\mathbf v)}(z)\,\lambda_{q+1+r}(d z)
 \\
 &=
    [L,M] 
    \int_{L} \int_{M} 1_{P(\x)+P(\mathbf v)}(y+z) \, \lambda_{M}(dz) \,\lambda_{L}(dy)
   \\& =
    [L,M]\,  \lambda_r(P(\mathbf v))  \lambda_{q+1}(P(\x))
    \\&=
     [L,M]  \nabla_{q+1}(\x).
\end{align*}
Now \eqref{eqGeoDeltaNabla}, the definition of $\Delta_q$ and the calculation of the volume of a $(q+1)$-dimensional pyramid (using Tonelli's theorem) gives
\begin{align*}
    \nabla_{q+1}(\bx)&=(q+1)!\Delta_{q+1}(o,x_1,\ldots,x_q)
    \\&=q!\,\Delta_{q}(x_0,\ldots,x_q)d\big(o,\aff\{x_0,\ldots,x_q\}\big)
    \\
    &=q!\, \Delta_{q}(\bx)d(o,E).
\end{align*}
This shows assertion (i).

To show (ii) let $E=M+z$, $z\in M^\perp$, and assume that $u_1,\ldots,u_q$ is an orthonormal basis of $M$. As the affine hull of $z,z+u_1,\ldots,z+u_q$ is $E$, relation \eqref{eqGeoNablaSubspace} and 
\[
(q!)\Delta_q(z,z+u_1,\ldots,z+u_q)=1
\]
(by \eqref{eqGeoDeltaNabla}) show 
\begin{align*}
D(E,L)&=\nabla_{q+1,r}(z,z+u_1,\ldots,z+u_q,L)
\\&=\nabla_{q+1,r}(z,u_1,\ldots,u_q,L),
\end{align*}
where \eqref{eq:Gul7.9} was applied back and forth $q$ times at the last step. 
Now
\eqref{eq:Gul7.9}
yields 
\begin{align*}
D(E,L)&=d(z,M+L)\nabla_{q,r}(u_1,\ldots,u_q,L)
 =d(E,L)[M,L],
\end{align*}
as claimed.

We prove assertion (iii). 
		If $z\in \myspan\{u\}$, the claim \eqref{eqOneDimMore} is trivially true in view of the definition of $D(E,L)$, so we assume  $z\not \in \myspan\{u\}$ in the following.  Using 
		$z\in M^\perp$ and $u\in S^{n-1}$ we obtain 
		\begin{align}
			d(z,M+\myspan\{u\})^2=\|z-\langle z,u\rangle u\|^2=\|z\|^2\,\|p(u|z^\perp)\|^2. 
			\label{eq:dnew}
		\end{align}
		If $u_1,\ldots u_q$ is an orthonormal basis of $M$ then 
		$u_1,\ldots u_q, \frac{z}{\|z\|}, \frac{p(u|z^\perp)}{\|p(u|z^\perp)\|}$
		is an orthonormal basis of $M+\myspan\{z,u\}$.  The definition of $D(E,L)$ and \eqref{eq:dnew} give
		\begin{align*}
			D(E+\myspan\{u\},L)
			&=d(o,z+M+\myspan\{u\})\,\big[M+\myspan\{z,u\},L\big]
			\\
			&=d(z,M+\myspan\{u\})
			\,\nabla_{q+2,r}\Big(u_1,\ldots u_q, \frac{z}{\|z\|}, \frac{p(u|z^\perp)}{\|p(u|z^\perp)\|},L\Big).  
			\\
			&=\|z\|
			\,\nabla_{q+2,r}\Big(u_1,\ldots u_q, \frac{z}{\|z\|}, p(u|z^\perp),L\Big).  
		\end{align*}
		Using that $(L+M+z)^\perp \subseteq z^\perp$ we conclude from \eqref{eq:Gul7.9},
  applied to the vector $\frac{p(u| z^\perp)}{\|p(u|z^\perp\|}$, that 
		\begin{align*}
			D(E+\myspan\{u\},L)
			&=\|z\|\,\nabla_{q+1,r}\Big(u_1,\ldots u_q, \frac{z}{\|z\|}, L\Big)\,
			\big\|p\big(u\big|(L+M+z)^\perp\big)\big\|.
		\end{align*}
		As $\|z\|=d(o,E)$ and 
			$u_1,\ldots u_q, \frac{z}{\|z\|}$
			is an orthonormal basis of $M+\myspan\{z\}$, we have \\
		$\|z\|\,\nabla_{q+1,r}\Big(u_1,\ldots u_q, \frac{z}{\|z\|}, L\Big)=
		D(E,L) $ and the assertion follows.

\phantom\qedhere
\end{proof}
The first statement in Lemma \ref{lemD(E,L)} is a rather technical relation, only used in the proof of Theorem \ref{thm:BP}. Lemma \ref{lemD(E,L)}.(ii) shows that $D(E,L)$ depends only on the distance between the two flats and the generalized angle between the corresponding linear subspaces. In particular, it shows that if $E \in A(n,0)$ and $L \in G(n,r)$ then $D(E,L)=\| p(E| L^\perp)\|$. The last statement in Lemma \ref{lemD(E,L)} is used in the proof of Theorem \ref{Thm:Uniqueness}. 

\medskip

We define the constants
 \begin{equation}\label{eqGeoDefB}
      b_{n,q}:= \frac{\omega_{n-q+1}\cdots \omega_n}{\omega_1 \cdots \omega_q},  \quad 1 \leq q \leq n,
 \end{equation}
 needed in certain  
 Blaschke--Petkantschin formulae. Fix integers $k,q \geq 1$ and $r \geq 0$ such that
 $q+r \leq k \leq n$ and a subspace $L_0 \in G(n,r)$. Generically, the entries of a vector $\x$ consisting of $q$ points in $\R^n$ and $L_0$ span a subspace $L\in G(L_0,r+q)$, which will be called a \emph{pivot}. The following Blaschke--Petkantschin formula (with $k=q+r$) shows that invariant integration with respect to $\x$ can be achieved by invariantly integrating $\x$ in $L$, and then integrating the pivot:
\begin{align}\label{thmGeo722}
    \int_{(\mathbb{R}^n)^q} f(\x) \,\lambda^q(d \x)
    =
    \frac{b_{n-r,q}}{b_{k-r,q}} \int_{G(L_0,k)}\int_{L^q} f(\x)  \nabla_{q,r}(\x, L_0)^{n-k} \,\lambda_L^{q}(d \x) \,\nu_{k}^{L_0}(dL)
\end{align}
holds for all measurable functions $f: (\mathbb{R}^n)^q\rightarrow [0,\infty)$, see \cite[Thm.~7.2.2]{GulBog}. 
A proof of the  Blaschke--Petkantschin formula  \eqref{thmGeo722}, based on polar decomposition of the $k$-fold product of Lebesgue measure, can be found in \cite{AlternativeBPandAffineBP2,AlternativeBPandAffineBP1,AlternativeBPandAffineBP}. 
A similar, affine  
Blaschke--Petkantschin formula   (\cite[Thm.~7.2.7]{GulBog}) states 
\begin{equation}\label{thmGeo727}
    \int_{(\mathbb{R}^n)^{q+1}} f (\x)  \, \lambda^{q+1}(d\x)
    =
    b_{n,q}(q!)^{n-q} \int_{A(n,q)} \int_{E^{q+1}} f(\x) \Delta_q (\x) ^{n-q}\,\lambda_E^{q+1}(d\x) \,\mu_q (dE)
\end{equation}
for all measurable functions $f: (\mathbb{R}^n)^{q+1}\rightarrow [0,\infty)$ and $q \in \{1,...,n\}$.
An overview of Blaschke--Petkantschin-type formulas with a sphere as pivot can be found in \cite{Nikitenko2019BPSpheres}.

For later reference, we state the classical Crofton formula (see,  e.g.,~\cite[Thm.~5.10]{LectureNotesConvexGeometry} or \cite[eq.~(4.59)]{SchneiderRoed}), which was already mentioned in the introduction in \eqref{eqNy1}: 
\begin{equation}\label{thmGeo511Crofton}
    \int_{A(n,q)} V_{q-j}(K \cap E) \,\mu_q (dE) 
    =
    c_{q-j,n}^{n-j,q}\, V_{n-j}(K), \quad K \in \mathcal{K}^n,
\end{equation}
valid for $q \in \{0,...,n\}$ and $j \in \{0,...,q\}$ with 
\begin{equation}
    c_{s_1,s_2}^{r_1,r_2}:= \frac{r_1! \kappa_{r_1}}{s_1! \kappa_{s_1}}\frac{r_2! \kappa_{r_2}}{s_2! \kappa_{s_2}}.    \label{eqGeoCroftonKonstant}
\end{equation}
Numerous variants and generalizations of \eqref{thmGeo511Crofton} can be found in the literature, see the overview \cite{HugSchneider}--the following list exemplifies the variety without claiming completeness:  
the underlying Euclidean space can be replaced by a Finsler space \cite{Bernig07,CroftonFinslerSpaces}, 
the results for intrinsic volumes can be extended to support measures \cite{Glasauer97}, the 
set class $\mathcal K^n$ can be generalized, for example to $\mathcal U_{PR}$-sets \cite{Rataj02}, a family of certain finite unions of sets with positive reach,   and the invariant integration in \eqref{thmGeo511Crofton} can be replaced by an invariant integration over translations of a fixed flat, leading to \emph{translative Crofton formulae} \cite{GoodeyWeil,Rataj02}. 
Our new result \eqref{eqNy3} can be considered as a generalization in the spirit of the last example, as we also replace integration over the full space $A(n,q)$ by integration over a geometrically meaningful subset.

\chapter{Rotational Crofton Formulae with a Fixed Subspace}
The main result of this paper is the following theorem. It uses the constant 
\begin{equation}
    \alpha_{n,k,q,r}
    :=
    \frac{\omega_{k-q-r} \cdots \omega_{k-q}}{\omega_{n-q-r} \cdots \omega_{n-q}} \prod_{j=0}^{r-1} \frac{\omega_{n-j}}{ \omega_{k-j}}     \label{eqGeoAlphaKonstant}.
\end{equation}
For $r=0$ the last product in \eqref{eqGeoAlphaKonstant} is set equal to $1$ by convention.

\begin{thm}[Rotational Crofton formulae with a fixed subspace]\label{thmGeoRotationalCroften}
Let  $n,r,k\in \N_0$ with $r+1\le k\le n$ be given and fix a subspace $L_0\in G(n,r)$. Then, for $j=0,\ldots,k-(r+1)$, 
\begin{equation}\label{eq:CroftAxis}
	\int_{G(L_0,k)} \varphi_{L,q}^{L_0}(K\cap L)\,\nu_k^{L_0}(dL)=V_{n-j}(K),\qquad K\in \mathcal{K}^n,
\end{equation}
holds with 
\begin{align}
\varphi_{L,q}^{L_0}(K\cap L)
	=\alpha_{n,k,q,r}^{-1}c^{q-j,n}_{n-j,q}\int_{A(L,q)} V_{q-j}(K\cap E)\,D(E,L_0)^{n-k}\,\mu_q^L(dE).
	\label{eq:measureFunct_RotCrofton}
\end{align}
Here, $D(E,L_0)$ is given by \eqref{eq:DefD}, the leading constant is defined in 
\eqref{eqGeoCroftonKonstant} and  \eqref{eqGeoAlphaKonstant}, and $q$ can be chosen in $\{j,\ldots,k-(r+1)\}$. 
\end{thm}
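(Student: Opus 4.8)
The plan is to read the inner integral in \eqref{eq:measureFunct_RotCrofton} as an affine Blaschke--Petkantschin formula relative to the fixed subspace $L_0$, and then to invoke the classical Crofton formula \eqref{thmGeo511Crofton}. Substituting \eqref{eq:measureFunct_RotCrofton} into the left-hand side of \eqref{eq:CroftAxis}, the whole statement reduces to the single integral-geometric identity
\begin{equation}\label{eq:plan-BP}
\int_{G(L_0,k)}\int_{A(L,q)} g(E)\,D(E,L_0)^{n-k}\,\mu_q^L(dE)\,\nu_k^{L_0}(dL)
= \alpha_{n,k,q,r}\int_{A(n,q)} g(E)\,\mu_q(dE),
\end{equation}
evaluated at $g(E)=V_{q-j}(K\cap E)$. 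Indeed, granting \eqref{eq:plan-BP}, the left-hand side of \eqref{eq:CroftAxis} becomes $\alpha_{n,k,q,r}^{-1}c^{q-j,n}_{n-j,q}\,\alpha_{n,k,q,r}\int_{A(n,q)}V_{q-j}(K\cap E)\,\mu_q(dE)$, and \eqref{thmGeo511Crofton} evaluates the remaining integral as $c_{q-j,n}^{n-j,q}V_{n-j}(K)$; since $c^{q-j,n}_{n-j,q}$ and $c_{q-j,n}^{n-j,q}$ are mutually reciprocal by \eqref{eqGeoCroftonKonstant}, everything collapses to $V_{n-j}(K)$.

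The heart of the argument is thus \eqref{eq:plan-BP}, which I would obtain by computing $\int_{(\R^n)^{q+1}}f\,\lambda^{q+1}$ in two ways for $f(\bx)=h(\aff\bx)\,\Phi(\bx)$, with $h\ge0$ depending only on the spanned flat and $\Phi\ge0$ an auxiliary tuple-weight. Applying the affine formula \eqref{thmGeo727} directly gives $b_{n,q}(q!)^{n-q}\int_{A(n,q)}h(E)\,G(E)\,\mu_q(dE)$, where $G(E):=\int_{E^{q+1}}\Phi(\bx)\,\Delta_q(\bx)^{n-q}\,\lambda_E^{q+1}(d\bx)$. For the second evaluation I would first apply \eqref{thmGeo722} with $q+1$ points---admissible because $q\le k-(r+1)$ forces $(q+1)+r\le k$---to reach pivots $L\in G(L_0,k)$ carrying the weight $\nabla_{q+1,r}(\bx,L_0)^{n-k}$, and then apply \eqref{thmGeo727} inside each $L\cong\R^k$. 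The decisive simplification is Lemma \ref{lemD(E,L)}.(i): for $E=\aff\bx$ it reads $\nabla_{q+1,r}(\bx,L_0)=q!\,D(E,L_0)\,\Delta_q(\bx)$, so the pivot weight becomes $(q!)^{n-k}D(E,L_0)^{n-k}\Delta_q(\bx)^{n-k}$ and fuses with the factor $\Delta_q(\bx)^{k-q}$ from the inner formula into $\Delta_q(\bx)^{n-q}$, regenerating the very same $G(E)$. This second route yields $\frac{b_{n-r,q+1}}{b_{k-r,q+1}}\,b_{k,q}\,(q!)^{n-q}\int_{G(L_0,k)}\int_{A(L,q)}h(E)\,D(E,L_0)^{n-k}G(E)\,\mu_q^L(dE)\,\nu_k^{L_0}(dL)$. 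Equating the two expressions and choosing $\Phi$ so that $0<G(E)<\infty$ for every $E$ (so that $hG$ ranges over all nonnegative measurable functions of $E$) yields \eqref{eq:plan-BP} with $\alpha_{n,k,q,r}=\frac{b_{k-r,q+1}\,b_{n,q}}{b_{n-r,q+1}\,b_{k,q}}$.

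The principal obstacle I foresee is the constant identification: one must verify that $\frac{b_{k-r,q+1}\,b_{n,q}}{b_{n-r,q+1}\,b_{k,q}}$ agrees with the definition \eqref{eqGeoAlphaKonstant} of $\alpha_{n,k,q,r}$. Expanding each $b$ through \eqref{eqGeoDefB} produces a large ratio of $\omega$-factors whose cancellation requires careful bookkeeping; a spot check ($n=4$, $k=3$, $q=r=1$) gives $\omega_1\omega_4/\omega_3^2$ from both expressions, which is encouraging. The remaining work is routine but must be recorded: justifying Tonelli's theorem at each nesting of Blaschke--Petkantschin formulae (all integrands are nonnegative), and checking that the exceptional configurations---tuples $\bx$ failing to affinely span a $q$-flat, and pivots with $\myspan E\cap L_0\ne\{o\}$---are negligible, which is automatic since they are annihilated by the vanishing factors $\Delta_q(\bx)$ or $D(E,L_0)$. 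Finally, the admissible range of $q$ is dictated by the construction: the upper bound $q\le k-(r+1)$ is the dimension constraint for \eqref{thmGeo722} with $q+1$ points and a $k$-dimensional pivot, while the lower bound $q\ge j$ is exactly what makes $V_{q-j}(K\cap E)$ a legitimate intrinsic volume and keeps us inside the validity range of \eqref{thmGeo511Crofton}.
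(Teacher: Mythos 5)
Your proposal follows, in all essentials, the paper's own route: the integral-geometric identity you isolate as the crux is exactly the Blaschke--Petkantschin formula of Theorem \ref{thm:BP} (equation \eqref{eq:nyBP}), and your two-way evaluation of the $(q+1)$-fold integral --- \eqref{thmGeo727} in $\R^n$ on one side, \eqref{thmGeo722} with $q+1$ points followed by \eqref{thmGeo727} inside each pivot $L\cong\R^k$ on the other, fused by Lemma \ref{lemD(E,L)}(i) --- is precisely the paper's computation, with the same constant $\alpha_{n,k,q,r}=\frac{b_{n,q}}{b_{k,q}}\frac{b_{k-r,q+1}}{b_{n-r,q+1}}$ and the same final appeal to the classical Crofton formula \eqref{thmGeo511Crofton}. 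The only stylistic difference is that the paper commits to a concrete weight from the outset, namely $\Phi(\x)=\Delta_q(\x)^{q-n}\prod_{i=1}^{q+1}1_{B^n}\big(x_i-p(o|\aff(\x))\big)$, which makes your $G(E)$ identically equal to $\kappa_q^{q+1}$; you leave $\Phi$ generic and merely require $0<G(E)<\infty$, so for completeness you must still exhibit one such $\Phi$ --- the paper's choice does the job, after which your argument coincides with the published one.

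There is, however, one genuine (though easily repaired) gap: the case $q=0$, which the theorem permits whenever $j=0$. Both of your evaluations invoke \eqref{thmGeo727} at the value $q$ itself, but that formula is stated only for $q\in\{1,\ldots,n\}$, and the constant $b_{n,0}$ is not even defined by \eqref{eqGeoDefB}; as written, your unified argument therefore does not cover $q=0$. The paper treats this case separately and more simply: \eqref{thmGeo722} applied to a single point already yields the claim, because $\nabla_{1,r}(x,L_0)=D(\{x\},L_0)$, the measure $\mu_0^L$ is the image of $\lambda_L$ under $x\mapsto\{x\}$, and $b_{n-r,1}/b_{k-r,1}=\alpha_{n,k,0,r}^{-1}$. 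Either add this direct computation for $q=0$, or restrict your double-evaluation argument to $q\ge 1$ and quote the one-point case separately; with that amendment your proof is complete.
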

The special case $r=0$ of Theorem \ref{thmGeoRotationalCroften} poses no constraints on the linear spaces in $G(L_0,k)=G(\{o\},k)=G(n,k)$ and states that 
\begin{equation}\label{eq:rotCroftr=0}
	\int_{G(n,k)} \varphi_{L,q}(K\cap L)\,\nu_k^{L_0}(dL)=V_{n-j}(K),\qquad K\in \mathcal{K}^n,
\end{equation}
where 
\begin{align}
	\varphi_{L,q}(K\cap L)
	=
	\frac{\omega_{n-q}}{\omega_{k-q}}c^{q-j,n}_{n-j,q}
	\int_{A(L,q)} V_{q-j}(K\cap E)d(o,E)^{n-k}\,\mu_q^L(dE)
	\label{eq:measureFunct_RotCroftonr=0}
\end{align}
for $q\in\{j,\ldots,k-1\}$. 
This is the precise statement of the rotational Crofton formula in \cite{EvaAuneau2010} already mentioned qualitatively in the introduction in \eqref{eqNy2}. There thus appear to be $k-j$ different measurement functions. However, $\varphi_{L,j}=\cdots=\varphi_{L,k-1}$, so all these measurement functions coincide. This fact, although known, appears to be unpublished (see, however, the forthcoming publication \cite{EvaMarkusBog}). It also follows from Theorem \ref{Thm:Uniqueness}  in Section \ref{sec:measureFct}, which establishes the independence of the measurement functions of $q$ for general  $r\ge 0$.

The proof of Theorem \ref{thmGeoRotationalCroften} will be given below. It is obtained by combining the classical Crofton formula 
\eqref{thmGeo511Crofton} with the following Blaschke--Petkantschin result.

\begin{thm}\label{thm:BP}
		Let $n,q,r,k\in \N_0$ with $q+r+1\le k\le n$ be given and fix a subspace $L_0\in G(n,r)$. Then, for
		any  measurable function $f:A(n,q)\to [0,\infty)$ we have 
		\begin{align}\nonumber
			&\int_{G(L_0,k)}\int_{A(L,q)} f(E)  D(E,L_0)^{n-k}\, \mu_q^L(dE)\,\nu_k^{L_0}(dL)
 \\ \quad &=
			\alpha_{n,k,q,r}\int_{A(n,q)}f(E) \,\mu_q(dE),\label{eq:nyBP}
		\end{align} 
		with the constant  \eqref{eqGeoAlphaKonstant} and $D(E,L_0)$ given by \eqref{eq:DefD}. 
	\end{thm}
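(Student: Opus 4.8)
The plan is to compute the tuple integral $\int_{(\R^n)^{q+1}}F(\x)\,\lambda^{q+1}(d\x)$ of an arbitrary nonnegative measurable test function $F$ on $(q+1)$-tuples in two ways and to compare the results. For the first evaluation I would apply the affine Blaschke--Petkantschin formula \eqref{thmGeo727} in $\R^n$, obtaining $b_{n,q}(q!)^{n-q}\int_{A(n,q)}\Phi_F(E)\,\mu_q(dE)$, where I abbreviate $\Phi_F(E):=\int_{E^{q+1}}F(\x)\,\Delta_q(\x)^{n-q}\,\lambda_E^{q+1}(d\x)$. For the second evaluation I would first invoke the linear formula \eqref{thmGeo722} with $q$ replaced by $q+1$ and pivot dimension $k$; its admissibility condition $(q+1)+r\le k\le n$ is precisely the standing hypothesis $q+r+1\le k\le n$. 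This writes the tuple integral as $\tfrac{b_{n-r,q+1}}{b_{k-r,q+1}}\int_{G(L_0,k)}\int_{L^{q+1}}F(\x)\,\nabla_{q+1,r}(\x,L_0)^{n-k}\,\lambda_L^{q+1}(d\x)\,\nu_k^{L_0}(dL)$.

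The algebraic bridge between the two routes is Lemma \ref{lemD(E,L)}(i): with $E=\aff(\x)$ it gives $\nabla_{q+1,r}(\x,L_0)=q!\,\Delta_q(\x)\,D(E,L_0)$, hence $\nabla_{q+1,r}(\x,L_0)^{n-k}=(q!)^{n-k}\Delta_q(\x)^{n-k}D(E,L_0)^{n-k}$. After this substitution I would apply the affine formula \eqref{thmGeo727} \emph{inside} each pivot $L$, regarded as a copy of $\R^k$, so that $A(n,q),\mu_q$ are replaced by $A(L,q),\mu_q^L$ while $\Delta_q$ retains its intrinsic meaning (note $L_0\subseteq L$ and $o\in L$, so $D(E,L_0)$ is unchanged). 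The two powers of $\Delta_q$ then combine as $\Delta_q(\x)^{n-k}\Delta_q(\x)^{k-q}=\Delta_q(\x)^{n-q}$, while $D(E,L_0)^{n-k}$ is constant on each fibre $E^{q+1}$ and factors out. The second route thus equals $\tfrac{b_{n-r,q+1}}{b_{k-r,q+1}}b_{k,q}(q!)^{n-q}\int_{G(L_0,k)}\int_{A(L,q)}D(E,L_0)^{n-k}\Phi_F(E)\,\mu_q^L(dE)\,\nu_k^{L_0}(dL)$, built from the \emph{same} functional $\Phi_F$. Equating the two routes and cancelling $(q!)^{n-q}$ yields the claimed identity, but with $\Phi_F$ in place of the given $f$.

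The step I expect to be the main obstacle is promoting the identity from the special integrands $\Phi_F$ to an arbitrary measurable $f\ge0$ on $A(n,q)$. The key observation is that for $\lambda^{q+1}$-almost every tuple the affine hull $\aff(\x)$ is a well-defined element of $A(n,q)$, so the fibres $\{E^{q+1}:E\in A(n,q)\}$ partition the tuple space up to a null set. I would therefore fix a strictly positive reference weight, e.g.\ the Gaussian $\eta(\x)=\exp\bigl(-\sum_{i=0}^q\|x_i\|^2\bigr)$, put $N(E):=\int_{E^{q+1}}\eta(\x)\,\Delta_q(\x)^{n-q}\,\lambda_E^{q+1}(d\x)$, and choose $F(\x)=f(\aff(\x))\,\eta(\x)/N(\aff(\x))$. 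Since the Gaussian decay dominates the polynomial factor $\Delta_q^{n-q}$, the normalisation $N(E)$ is finite, positive and measurable in $E$, so $F$ is a legitimate nonnegative measurable function with $\Phi_F=f$; the identity for $\Phi_F$ then gives the identity for every such $f$. Verifying these properties of $N$ is routine, but this is the one place demanding genuine care.

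Finally I would identify the constant. Equating the two routes leaves $\tfrac{b_{n,q}\,b_{k-r,q+1}}{b_{n-r,q+1}\,b_{k,q}}$ on the side of the nested integral, and it remains to check that this equals $\alpha_{n,k,q,r}$. Inserting the definition \eqref{eqGeoDefB}, the factors $b_{n-r,q+1}$ and $b_{k-r,q+1}$ both carry the denominator $\omega_1\cdots\omega_{q+1}$, and $b_{n,q},b_{k,q}$ both carry $\omega_1\cdots\omega_q$; these cancel, leaving a quotient of four blocks of consecutive $\omega$-factors, and cancelling their overlapping sub-blocks reproduces the two quotients in \eqref{eqGeoAlphaKonstant}. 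This $\omega$-index bookkeeping is the only purely computational part. The degenerate case $q=0$, in which \eqref{thmGeo727} is trivial and $D(E,L_0)=\|p(E\mid L_0^\perp)\|$, is treated directly and causes no difficulty.
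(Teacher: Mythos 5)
Your proposal is correct and follows essentially the same route as the paper's proof: evaluate a $(q+1)$-tuple integral once via the affine Blaschke--Petkantschin formula \eqref{thmGeo727} in $\R^n$, and once via \eqref{thmGeo722} (with $q+1$ points and pivot dimension $k$) followed by \eqref{thmGeo727} applied inside the pivot $L\cong\R^k$, bridging the two weights with Lemma \ref{lemD(E,L)}(i) and matching the same constant $\frac{b_{n,q}}{b_{k,q}}\frac{b_{k-r,q+1}}{b_{n-r,q+1}}=\alpha_{n,k,q,r}$. The only difference is the normalization device: where you derive the identity for all $\Phi_F$ and then invert via a Gaussian weight, the paper plugs in directly $F(\x)=f(\aff(\x))\,\Delta_q(\x)^{q-n}\prod_{i}1_{B^n}\big(x_i-p(o|\aff(\x))\big)$, so that each fibre integral equals $\kappa_q^{q+1}f(E)$ at once; both treatments dispose of $q=0$ separately via \eqref{thmGeo722}.
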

\begin{proof}
	We start with the special case $q=0$. Equation  \eqref{thmGeo722} implies 
\begin{align}\label{eqGeoq=0}
 &\int_{A(n,0)} f(E) \,\mu_q(dE)
 = \int_{\mathbb{R}^n} f(\{x\}) \,\lambda(dx) \nonumber
    \\&=
    \frac{b_{n-r,1}}{b_{k-r,1}}
     \int_{G(L_0,k)} \int_{L}f(\{x\}) \nabla_{1,r}(x,L_0)^{n-k}\,\lambda_L(dx) \, \nu_{k}^{L_0}( dL).
\end{align}
Since  $\nabla_{1,r}(x, L_0)=D(\{x\},L_0)$, $\mu_0^L$ is the image measure of 
 $\lambda_L$ under the identification $x \mapsto \{x\}$, 
 and $ \frac{b_{n-r,1}}{b_{k-r,1}} = \alpha_{n,k,0,r}^{-1}$ the relation \eqref{eqGeoq=0} is equivalent to the claim 
 when $q=0$. 
 
Now assume $q>0$ and consider
\begin{align*}
    g: (\mathbb{R}^n)^{q+1} &\rightarrow \mathbb{R},
    \\
    \x &\mapsto  f( \aff(\x)) \Delta_q(\x)^{q-n} h(\x),
\end{align*}
with $h(\x)=\prod_{i=1}^{q+1} 1_{B^n}\big(x_i-p(o|\aff(\x))\big)$, defined for all $\x \in (\mathbb{R}^n)^{q+1}$ such that $\aff(x) \in A(n,k)$. Using \eqref{thmGeo727} we get
\begin{align}\nonumber
    &\int_{(\mathbb{R}^n)^{q+1}} g(\x) \,\lambda^{q+1}(d\x)
  \\  \nonumber &\,\quad=
   b_{nq} (q!)^{n-q} \int_{A(n,q)} \int_{E^{q+1}}     f(\aff(\x)) \Delta_q(\x)^{q-n} h(\x)
    \Delta_q(\x)^{n-q} \, \lambda_E^{q+1}(d\x) \, \mu_q(dE)
    \\ \nonumber
    &\ \quad=
    b_{nq} (q!)^{n-q} \int_{A(n,q)}f(E) 
     \int_{E^{q+1}}     h(\x)
  \, \lambda_E^{q+1}(d\x) \, \mu_q(dE)
  \\
 &\ \quad=
  b_{nq} (q!)^{n-q} \kappa_{q}^{q+1} \int_{A(n,q)}f(E) \,\mu_q(dE).\label{eqPart1}
\end{align}
On the other hand \eqref{thmGeo722} with $q$ replaced by $q+1$ gives
\begin{align}\label{eqPart2}
     \int_{(\mathbb{R}^n)^{q+1}} g (\x) \,\lambda^{q+1}(d\x)
     &=
 \frac{b_{n-r,q+1}}{b_{k-r,q+1}}     \int_{G(L_0,k)}I(L) \,\nu_{k}^{L_0}(dL),
\end{align}
with 
\begin{equation*}
   I(L):=  \int_{L^{q+1}} g(\x) \nabla_{q+1,r}(\x, L_0)^{n-k} \,\lambda_L^{q+1}(d\x).
\end{equation*}
Using  $L_0 \subseteq L$, we can identify $L$ with $\mathbb{R}^k$. An application of \eqref{thmGeo727} in $\R^k$ then yields
\begin{align*}
    I(L)&=b_{k,q}(q!)^{k-q} \int_{A(k,q)} \int_{E^{q+1}} g(\x) \Delta_q(\x)^{k-q} \nabla_{q+1,r}(\x, L_0)^{n-k} \, \lambda_E^{q+1}(d\x) \, \mu_q (dE). 
    \end{align*}
    Inserting the definition of $g$ gives    
    \begin{align*}
    I(L)&=
   b_{k,q}(q!)^{n-q}  \int_{A(k,q)} \int_{E^{q+1}} f\big(\aff(\x)\big) \left(\frac1{q!}\frac{\nabla_{q+1,r}(\x, L_0)}{\Delta_q(\x)}\right)^{n-k}\\
   &\qquad \times h(\x)  \, \lambda_E^{q+1}(d\x) \, \mu_q (dE)
  \\
   &=b_{k,q}(q!)^{n-q} \kappa_q^{q+1}  \int_{A(k,q)} f(E) D(E,L_0)^{n-k}
  \, \mu_q (dE),
\end{align*}
where Lemma \ref{lemD(E,L)} was used in the last step. 

Comparing \eqref{eqPart1} with \eqref{eqPart2}, using the explicit form of $I(L)$ just derived, 
shows the claim \eqref{eq:nyBP}. To simplify the constant, we used 
\begin{align*}
   \frac{b_{n,q}}{b_{k,q}} \frac{b_{k-r,q+1}}{b_{n-r,q+1}} 
   =
   \frac{\omega_{n-q+1} \cdots \omega_n}{\omega_{n-r-q}\cdots \omega_{n-r}}
   \cdot \frac{\omega_{k-r-q} \cdots \omega_{k-r}}{\omega_{k-q+1} \cdots \omega_k}
   =
    \frac{\omega_{k-q-r} \cdots \omega_{k-q}}{\omega_{n-q-r} \cdots \omega_{n-q}} \prod_{j=0}^{r-1} \frac{\omega_{n-j}}{ \omega_{k-j}},
\end{align*}
which follows by induction in $r=0,1, \ldots k-q-1$, if we define $\prod_{j=0}^{r-1} \frac{\omega_{n-j}}{ \omega_{k-j}}:=1$ when $r=0$. 
This concludes the proof. 
\phantom\qedhere
\end{proof}

We can now give a proof of the main result. 
\begin{proof}[Proof of Theorem \ref{thmGeoRotationalCroften}]
With the assumptions of Theorem \ref{thmGeoRotationalCroften}, put $f(E)=V_{q-j}(K\cap E)$ in the \\Blaschke--Petkantschin formula (Theorem \ref{thm:BP}) and observe that the right-hand side is a classical Crofton integral that can be evaluated using \eqref{thmGeo511Crofton}. 
\phantom\qedhere
\end{proof}

\chapter{The measurement functions}\label{sec:measureFct}
We collect properties and simplified representations of the measurement function defined in Theorem \ref{thmGeoRotationalCroften}. To avoid unnecessary repetitions, we put
\begin{equation}
c_0(q):=  
 \alpha_{n,k,q,r}^{-1}c^{q-j,n}_{n-j,q}
 \label{eq:defC0}
\end{equation}
for the leading constant in \eqref{eq:measureFunct_RotCrofton}, thinking of all dimensions other than $q$ as fixed. 

The main result in this section is the independence of $\varphi_{L,q}^{L_0}$ from $q$ given in  Theorem \ref{Thm:Uniqueness}. 
With this independence in mind,  we can simply choose one value of $q$ and provide simplified expressions. This will be done in statements (ii) and (iii) of Proposition \ref{thmGeoReduktionMeasurementFunction} below. We start by stating the uniqueness result.

\begin{thm}\label{Thm:Uniqueness}
		Let the setting be as in Theorem \ref{thmGeoRotationalCroften}. Then, 
		$\varphi_{L,q}^{L_0}(L \cap K)$ is independent of 
		$q\in  \{j,\ldots, k-(r+1)\}$.
	\end{thm}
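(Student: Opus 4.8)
The plan is to fix $L\in G(L_0,k)$ and identify it with $\mathbb R^k$, so that the statement becomes a pointwise claim about the single space $L$: writing $I_q:=\int_{A(L,q)}V_{q-j}(K\cap E)\,D(E,L_0)^{n-k}\,\mu_q^L(dE)$ and $c_0(q)$ for the leading constant of \eqref{eq:defC0}, I must show that $c_0(q)\,I_q=\varphi_{L,q}^{L_0}(K\cap L)$ is independent of $q$. Note first that $D(E,L_0)$ depends only on distances and subspace determinants of objects contained in $L$, hence is intrinsic to $L$, and likewise $V_{q-j}(K\cap E)$ depends only on $K\cap L$. Since the admissible values of $q$ form an integer interval, it suffices to prove the one-step recursion $c_0(q)\,I_q=c_0(q+1)\,I_{q+1}$ for each consecutive pair $q,q+1\in\{j,\dots,k-(r+1)\}$.

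First I would lower the flat dimension in $I_{q+1}$ using the classical Crofton formula. Applying \eqref{thmGeo511Crofton} inside each $(q+1)$-flat $E'$, viewed as a copy of $\mathbb R^{q+1}$, rewrites $V_{(q+1)-j}(K\cap E')$ as $(c')^{-1}\int_{A(E',q)}V_{q-j}(K\cap E)\,\mu_q^{E'}(dE)$ with $c'=c^{(q+1)-j,q}_{q-j,q+1}$, so that
\begin{align*}
I_{q+1}=\tfrac1{c'}\int_{A(L,q+1)}D(E',L_0)^{n-k}\int_{A(E',q)}V_{q-j}(K\cap E)\,\mu_q^{E'}(dE)\,\mu_{q+1}^L(dE').
\end{align*}
The next step is to reverse the order of integration over the incident pair $E\subseteq E'$: every such pair arises uniquely as $E'=E+\myspan\{u\}$ for a $q$-flat $E\in A(L,q)$ and a direction $u$ in the unit sphere of $\myspan(E)^\perp\cap L$. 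This exchange is an affine incident-flat Blaschke--Petkantschin identity in $\mathbb R^k$, which I would derive from the affine Blaschke--Petkantschin machinery \eqref{thmGeo727} and \eqref{eqGEOAffineIntegrationToLinearplusShift} together with a Grassmannian incidence relation; it converts $\mu_q^{E'}(dE)\,\mu_{q+1}^L(dE')$ into $\mathrm{Jac}(E,u)\,\mathcal H^{k-q-1}(du)\,\mu_q^L(dE)$ for an explicit Jacobian.

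With the order reversed, Lemma \ref{lemD(E,L)}(iii), i.e.\ \eqref{eqOneDimMore}, factorizes the weight as $D(E+\myspan\{u\},L_0)^{n-k}=D(E,L_0)^{n-k}\,\|p(u\,|\,(L_0+M+z)^\perp)\|^{n-k}$, where $E=M+z$ with $M=\myspan(E)$ and $z\in M^\perp$. Pulling the $E$-only factors out, the remaining inner integral is
\[
\kappa(E):=\int_{S^{k-1}\cap M^\perp}\big\|p\big(u\,|\,(L_0+M+z)^\perp\big)\big\|^{n-k}\,\mathrm{Jac}(E,u)\,\mathcal H^{k-q-1}(du).
\]
By rotational symmetry within $M^\perp\cap L$ this integral depends only on $\dim(M^\perp\cap L)=k-q$, on the generic value $\dim\big((L_0+M+\myspan\{z\})^\perp\cap L\big)=k-q-r-1$, and on the exponent $n-k$; in particular it is a constant $\kappa$ independent of $E$. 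I would evaluate $\kappa$ by iterating the cylindrical decomposition \eqref{eqGEOSphericalFormula}, which expresses it as a product of surface-area constants $\omega_\bullet$. This yields the recursion $I_{q+1}=(\kappa/c')\,I_q$.

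The final step is bookkeeping: inserting the definitions \eqref{eqGeoCroftonKonstant}, \eqref{eqGeoAlphaKonstant} and \eqref{eq:defC0} of $c'$, $\kappa$ and $c_0$, I would verify the identity $c_0(q)\,c'=c_0(q+1)\,\kappa$, which is precisely what turns the recursion into $c_0(q)\,I_q=c_0(q+1)\,I_{q+1}$ and hence proves the theorem. I expect the main obstacle to be twofold: establishing the incident-flat change of variables with the correct Jacobian $\mathrm{Jac}(E,u)$, and then checking that the spherical constant $\kappa$, the Crofton constant $c'$, and the ratio $c_0(q)/c_0(q+1)$ telescope exactly through the $\omega_\bullet$-products. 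The decisive geometric input is \eqref{eqOneDimMore}: it is exactly the multiplicativity of $D(\cdot,L_0)$ under enlarging a flat by one dimension that lets the weight be separated from the spherical average, so that the average collapses to a dimension-only constant.
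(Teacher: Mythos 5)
Your proposal is correct and follows essentially the same route as the paper's proof: reduce to the one-step identity $\varphi_{L,q+1}^{L_0}=\varphi_{L,q}^{L_0}$, apply the classical Crofton formula inside each $(q+1)$-flat, exchange the incidence integration, factor the weight via Lemma \ref{lemD(E,L)}(iii), and evaluate the remaining spherical integral as a dimension-only constant before matching constants. The two obstacles you flag dissolve on citation: the incident-flat exchange is exactly \cite[Thm.~7.1.2]{GulBog} combined with \cite[eq.~(13.14)]{GulBog}, so your $\mathrm{Jac}(E,u)$ is just the constant $\omega_{k-q}^{-1}$ (no genuine Jacobian appears), and your constant $\kappa$ is supplied by \eqref{eq:brugesIAppendix}, together with the paper's invariance argument showing $\dim(L_0+M+z)^\perp=k-r-(q+1)$ off a null set.
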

\begin{proof}

 Let the assumptions of Theorem \ref{thmGeoRotationalCroften} be satisfied. If $\{j, \ldots ,k-(r+1)\}$ is a singleton, the claim is trivial. Thus, we may assume  $j <k-(r+1)$ and 
		it is enough to show 
			\begin{equation}
				\varphi_{L,q+1}^{L_0}(\cdot)=\varphi_{L,q}^{L_0}( \cdot)
				\label{eq:whatwewant}
			\end{equation}	
				 for every fixed $q\ge j$ such that $q +1 \leq k-(r+1)$. 
		
		Fix $E_1 \in A(L,q+1)$. Applying \eqref{thmGeo511Crofton} to the convex body $K \cap E_1$ in $E_1$ gives
		\begin{equation*}
			V_{(q+1)-j}(K \cap E_1) = c^{q-j,q+1}_{q-j+1,q} \int_{A(E_1,q)} V_{q-j}(K \cap E) \,\mu_q^{E_1}(dE).
		\end{equation*}
Combining this with 
\eqref{eq:measureFunct_RotCrofton}
yields 
\begin{align*}
    \varphi_{L,q+1}^{L_0}(K \cap L)
			&=c_1
\int_{A(L,q+1)} 
   \int_{A(E_1,q)} V_{q-j}(K \cap E) \,\mu_q^{E_1}(dE)
   \\& \qquad\times D(E_1,L_0)^{n-k}\,\mu_{q+1}^L(dE_1),
\end{align*}
where $c_1= c_0(q) {\frac{\omega_{n-r-(q+1)}}{\omega_{k-r-(q+1)}}} \frac{\omega_{k-q}}{\omega_{n-q}}$. 
Note that all flats involved are subsets of the $k$-dimensional subspace $L$, so we may intechange integrals due to \cite[Thm.~7.1.2]{GulBog},  applied in $L$. Identifying $L$ with $\R^k$, we get
	\begin{equation}
			\varphi_{L,q+1}^{L_0}(K \cap L)
			=
			c_1 \int_{A(L,q)} V_{q-j}(K \cap E) h(E) \,\mu_q^L(dE),
			\label{eq:q+1q}
		\end{equation}
		where
		\begin{equation*}
			h(E) =\int_{A(E,q+1)} D(E_1,L_0)^{n-k} \,\mu_q^{E}(dE_1)
		\end{equation*}
   is to be understood in $\R^k$. All orthogonal complements and unit spheres that will appear in the calculation of 
   $h(E)$ will therefore also be understood relative to $\R^k$.
	Writing $E= M+z$ with $M \in G(k,q)$ and $z \in M^\perp$, \cite[eq.~(13.14)]{GulBog} gives
		\begin{align*}
			h(M+z)
			&=\int_{G(M,q+1)}
			D(L_1+z,L_0)^{n-k} \,\nu_{q+1}^M(dL_1)
			\\&=
			\omega_{k-q}^{-1}
			\int_{S^{k-1} \cap M^\perp }
			D(M+\myspan(u) +z,L_0)^{n-k} \,{\mathcal{H}^{k-q-1}(du)} 
			\\&=\omega_{k-q}^{-1}
			D(E,L)^{n-k}
			\int_{S^{k-1} \cap M^\perp }
			\| p(u |(L_0+M+z)^\perp )  \|^{n-k} \,\mathcal{H}^{k-q-1}(du),
		\end{align*}
	  where we used  \eqref{eqOneDimMore} at the last equality sign.

	  We will now show that $z \neq o$ implies
	  		\begin{align}\nonumber
	  	I_z(M)&:=  \int_{S^{k-1} \cap M^\perp }
	  	\| p(u |(L_0+M+z)^\perp )  \|^{n-k} \, {\mathcal{H}^{k-q-1}}(du)
	  	\\&=	 \frac{\omega_p}{\omega_{n-k+p}}\omega_{n-q},
	  	\label{eq:IM}
	  \end{align}
  with $p = \dim ( L_0+M+z)^\perp$.

  To show relation \eqref{eq:IM} we apply \eqref{eqGEOSphericalFormula} in the $(k-q)$-dimensional space $M^\perp$
  	with $v=z/\|z\|$ using that $p(v|(M+L+z)^\perp)=o$ to obtain
\begin{align}
	I_z(M)= c'\,\,\int_{S^{k-1} \cap (M+ z)^\perp }
	\| p(w |(L_0+M+z)^\perp )  \|^{n-k} \, \mathcal{H}^{k-q-2}(dw), 
	\label{eq:Ifast}
\end{align}
where 
\[
c':=\int_{-1}^1 (1-t^2)^{\frac{n-q-3}{2}} \,dt=\frac{\omega_{n-q}}{\omega_{n-q-1}}
\]
has been  evaluated by setting $f(u)=1$ in  \eqref{eqGEOSphericalFormula}.  
  
  For fixed $d,j,p\in \N$ with $p\le d$, and $L'\in G(d,p)$ the result in \cite[Lemma 1]{EvaAuneau2010} gives 
\begin{align}\label{eq:brugesIAppendix}
\int_{S^{d-1}} \|p(u|L')\|^j\, du
&=\frac{\omega_{d-p}\omega_p}{2}
\int_{0}^1 t^{\frac{j+p}2-1}(1-t)^{\frac{d-p}{2}-1}\, dt 
=\frac{\omega_{d+j}}{\omega_{p+j}}\omega_p.
\end{align}
Using this with  $d=k-(q+1)$, $j=n-k$ and $p=\dim(L_0+M+z)^\perp$ in \eqref{eq:Ifast} yields the claim \eqref{eq:IM}.

  This claim \eqref{eq:whatwewant} follows  by inserting \eqref{eq:IM} into $h(\cdot)$ and the result into \eqref{eq:q+1q} and observing that $p=k-r-(q+1)$ holds for almost every $M$. Indeed,
\begin{align*}
p=\dim(L_0+M+z)^\perp 
&=k-\dim(L_0)-\dim(M+z)+\dim(L_0\cap (M+z))
\\&=k-r-(q+1)+0
\end{align*} 	
a.e., as an invariance argument yields 		
\begin{align*}
	& \int_{G(k,q)}\int_{M^\perp\cap RB^k} 1_{\dim(\myspan(M+z)\cap L_0)>0} \,\lambda_{M^\perp}(dz) \,\nu_q(dM)
	\\&=R^{k-q}\kappa_{k-q}
	\int_{G(k,q+1)}1_{\dim(M'\cap L_0)>0} \, \nu_{q+1}(dM')=0
\end{align*}
for all $R>0$, where we used \cite[Lemma 4.4.1]{SchneiderRoed} and $r+(q+1)<k$. This concludes the proof and the theorem is shown.
\phantom\qedhere
	\end{proof}

The following Proposition states properties of the measurement functions. Item (i) describes general characteristics.
 With Theorem \ref{Thm:Uniqueness} in mind, we can focus on one value of $q\in \{j,\ldots k-(r+1) \}$ when providing reductions of $\varphi_{L,q}^{L_0}$. 
Item (ii) in Proposition \ref{thmGeoReduktionMeasurementFunction} below, 
chooses therefore the minimal possible $q$: it shows that  the measurement function for $q=j$ can be interpreted as average of weighted projections onto $j$-dimensional linear subspaces that in turn are weighted according to their position relative to $L_0$. 
 Item (iii) expresses simply the case $j=0$, written for convenience.
That the right-hand side of \eqref{eq:measureVn} yields a rotational Crofton formula with axis for the volume can also be seen directly from \eqref{thmGeo722}. 

\begin{prop}\label{thmGeoReduktionMeasurementFunction}
	Let the assumptions of Theorem \ref{thmGeoRotationalCroften} be satisfied and fix $q\in\{j,\ldots,k-(r+1)\}$.
	When $L \in G(L_0,k)$  the following statements hold. 
	\begin{enumerate}
		\item[(i)] $\varphi_{L,q}^{L_0}$ is an additive 
		functional on the convex bodies in $L$. It is positive homogeneous of degree $n-j$ and invariant under all rotations fixing $L$ and $L_0$. 
		\item[(ii)] We have 
		\begin{align}
			\nonumber
		\varphi_{L,j}^{L_0}(K')&=c_0(j)
		\int_{G(L,j)} [M,L_0]^{n-k}\int_{K'|M^\perp}
		\\&\qquad \qquad \times 
	     d(z,M+L_0)^{n-k} \,\lambda_{M^\perp}(dz)\, \nu_j^L(dM),
	     \label{eq:qsmallest}
		\end{align}
		for all convex bodies $K'\subset L$. 
		\item[(iii)] If $j=0$ (rotational integral for the volume) this simplifies to 
		\begin{align}
	\varphi_{L,q}^{L_0}(K')&=\frac{\omega_{n-r}}{\omega_{k-r}} \int_{K'} d(x,L_0)^{n-k} \, \lambda_{L}(dx),
	\label{eq:measureVn}
\end{align}	      
		for all convex bodies $K'\subset L$. 
	\end{enumerate}
    \renewcommand{\theenumi}{\arabic{enumi}}%
\end{prop}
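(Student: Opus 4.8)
The plan is to treat the three parts in turn, in each case starting from the defining representation \eqref{eq:measureFunct_RotCrofton}, namely $\varphi_{L,q}^{L_0}(K')=c_0(q)\int_{A(L,q)}V_{q-j}(K'\cap E)\,D(E,L_0)^{n-k}\,\mu_q^L(dE)$ for a convex body $K'\subseteq L$, with $c_0(q)$ from \eqref{eq:defC0}. For part (i), additivity is inherited from the valuation property of $V_{q-j}$: if $K_1',K_2'\subseteq L$ with $K_1'\cup K_2'$ convex, then intersecting with any $E\in A(L,q)$ preserves the inclusion--exclusion identity, and linearity of the integral transfers it to $\varphi_{L,q}^{L_0}$. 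For homogeneity and invariance I would change variables in the integral. Replacing $K'$ by $\lambda K'$ and substituting $E=\lambda E'$ (which maps $A(L,q)$ onto itself since $L$ is linear), I use that $V_{q-j}$ is homogeneous of degree $q-j$, that $D(\lambda E',L_0)=\lambda D(E',L_0)$ (because $d(o,\cdot)$ scales by $\lambda$ while $\myspan E'$, hence the subspace determinant, is unchanged), and that $\mu_q^L(dE)=\lambda^{k-q}\mu_q^L(dE')$ under scaling, as seen from the decomposition \eqref{eqGEOAffineIntegrationToLinearplusShift}, where the $(k-q)$-dimensional Lebesgue factor picks up $\lambda^{k-q}$. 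The exponents combine to $(q-j)+(n-k)+(k-q)=n-j$. For a rotation $\rho$ fixing $L$ and $L_0$, substituting $E=\rho E'$ leaves $V_{q-j}$ invariant, leaves $D$ invariant (since $\rho$ fixes $o$ and $L_0$, and the subspace determinant is rotation invariant), and leaves $\mu_q^L$ invariant, proving rotation invariance.

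For part (ii) I set $q=j$, so that $V_0(K'\cap E)$ equals $1$ when $K'\cap E\neq\emptyset$ and $0$ otherwise. Applying the decomposition \eqref{eqGEOAffineIntegrationToLinearplusShift} inside $L$ turns the integral over $A(L,j)$ into an integral over $M\in G(L,j)$ and a translation $y$ in the $(k-j)$-dimensional complement $M^\perp\cap L$. Lemma \ref{lemD(E,L)}(ii) factors $D(M+y,L_0)=d(M+y,L_0)[M,L_0]$, pulling $[M,L_0]^{n-k}$ out of the inner integral. The two geometric identifications to make are: first, for $y\in M^\perp\cap L$ the flat $M+y$ meets $K'$ exactly when $y$ lies in the projection $K'|M^\perp$ (which, as $K'\subseteq L$ and $M\subseteq L$, already lands in $M^\perp\cap L$), so the indicator restricts the domain to $K'|M^\perp$; second, $d(M+y,L_0)=d(y,M+L_0)$, which follows because $\{m+y-\ell:m\in M,\ell\in L_0\}=y+(M+L_0)=\{y-w:w\in M+L_0\}$. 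After renaming $y$ to $z$ this yields exactly \eqref{eq:qsmallest}.

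For part (iii) I specialize part (ii) to $j=0$, where $q=j=0$, $G(L,0)=\{\{o\}\}$, the subspace determinant $[\{o\},L_0]=1$, the projection $K'|M^\perp=K'$, and $d(z,M+L_0)=d(z,L_0)$, so that $\varphi_{L,0}^{L_0}(K')=c_0(0)\int_{K'}d(z,L_0)^{n-k}\,\lambda_L(dz)$. A short evaluation of the constant from \eqref{eqGeoCroftonKonstant} and \eqref{eqGeoAlphaKonstant} gives $c^{0,n}_{n,0}=1$ and $\alpha_{n,k,0,r}=\omega_{k-r}/\omega_{n-r}$ (the telescoping product cancels all factors except $\omega_{k-r}$ in the numerator and $\omega_{n-r}$ in the denominator), hence $c_0(0)=\omega_{n-r}/\omega_{k-r}$. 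Finally Theorem \ref{Thm:Uniqueness} lets me replace $\varphi_{L,0}^{L_0}$ by $\varphi_{L,q}^{L_0}$ for any admissible $q$, completing \eqref{eq:measureVn}.

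The main obstacle I anticipate is the careful bookkeeping in part (ii): keeping track of which orthogonal complements and Lebesgue measures live in $\R^n$ versus inside $L$, verifying that the projection of $K'\subseteq L$ onto $M^\perp$ already lands in $M^\perp\cap L$, and confirming the distance identity $d(M+y,L_0)=d(y,M+L_0)$. The remaining computations---the scaling exponents in (i) and the constant in (iii)---are routine.
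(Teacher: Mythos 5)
Your proposal is correct and follows essentially the same route as the paper: part (i) from the valuation/homogeneity/invariance properties of intrinsic volumes together with invariance of $\mu_q^L$ (you supply the scaling bookkeeping the paper leaves implicit), part (ii) by combining \eqref{eqGEOAffineIntegrationToLinearplusShift} with Lemma \ref{lemD(E,L)}(ii) and identifying $V_0(K'\cap(M+z))=\1_{K'|M^\perp}(z)$, and part (iii) by specializing to $j=0$, evaluating $c_0(0)=\omega_{n-r}/\omega_{k-r}$ via the telescoping product, and invoking Theorem \ref{Thm:Uniqueness} to state the result for arbitrary admissible $q$. The only cosmetic difference is where Theorem \ref{Thm:Uniqueness} enters (the paper cites it at the start of (ii), you at the end of (iii)), which does not change the substance.
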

\begin{proof}
	The proof of (i) is straightforward. It uses additivity, homogeneity and motion invariance of the   intrinsic volumes, see, e.g.,~\cite[Section 3.3]{LectureNotesConvexGeometry} and the fact that $\varphi_{L,q}^{L_0}$ is defined via an invariant integral. 
	
	To show (ii) we may assume $q=j$ due to Theorem 
    \ref{Thm:Uniqueness}. Then \eqref{eq:measureFunct_RotCrofton}, and an application of  \eqref{eqGEOAffineIntegrationToLinearplusShift} 
	and \eqref{eq:DefDnew} yield
		\begin{align*}
	\varphi_{L,j}^{L_0}(K')&=c_0(j)
	\int_{G(L,j)} [M,L_0]^{n-k}\int_{M^\perp\cap L}
	\\&\qquad \times 
	 V_0(K'\cap (M+z))d(z,M+L_0)^{n-k} \, \lambda_{M^\perp}(dz)\, \nu_j^L(dM).
\end{align*}
The assertion now follows, observing that $V_0(K'\cap (M+z))=\1_{K'|M^\perp}(z)$.

	Finally, (iii) is a direct consequence of \eqref{eq:qsmallest} with $j=0$, observing that  $G(L,0)$ is a singleton containing only $\{o \}$.
 \phantom\qedhere
\end{proof}   

We remark that \eqref{eq:qsmallest} can also be expressed in terms of the radial function  of $K'|M^\perp$. 
The \emph{radial function}  of a convex body $K\subset\R^n$ with $o\in K$ is given by $\rho_{K}(u):= \sup \{ t \geq 0 \,: \,t\cdot u \in K \}$ for $u \in S^{n-1}$. 
Introducing spherical coordinates one sees that 
\[
\int_{K} f(z)\, dz=\int_{S^{n-1}} \int_0^{\rho_K(u)}f(ru) r^{n+1}\, dr\, \cH^{n-1}(du)
\]
holds for every non-negative measurable function    $f$. 
Applying this in $M^\perp$ with $f(z)=d(z,M+L_0)^{n-k}$ and $K=K'|M^\perp$ yields
		\begin{align*}
	\varphi_{L,j}^{L_0}(K')&=\frac{c_0(j)}{n-j}
	\int_{G(L,j)} [M,L_0]^{n-k}\int_{S^{n-1}\cap M^\perp\cap L}
	\\&\qquad \qquad \times 
	d(u,M+L_0)^{n-k} \rho_{K'|M^\perp}^{n-j}(u)\,\cH^{k-j-1}(du)\, \nu_j^L(dM)
	\label{eq:qsmallest}
\end{align*}  
provided that $o\in K'$. This gives an alternative representation of \eqref{eq:measureVn} in the case $j=0$, namely,
\begin{align*}
    \varphi_{L,0}^{L_0}(K')&=\frac{\omega_{n-r}}{n\omega_{k-r}}
	\int_{S^{n-1}\cap L}
		d(u,L_0)^{n-k} \rho_{K'}^{n}(u)\,\cH^{k-1}(du)
\end{align*}
which may be advantageous when the radial function is easily accessible. 
\medskip

We will now briefly discuss  which indices in Theorem \ref{thmGeoRotationalCroften} lead to new formulae and which are already established in the literature. 
As already outlined directly after the statement of Theorem \ref{thmGeoRotationalCroften}, our main result reduces  when $r=0$ to the rotational Crofton formulae in \cite{EvaAuneau2010} and \cite{GUALARNAU2010}. 
If we further restrict considerations to $n=3$ the resulting formulas date even further back as then the results are closely related to classical estimators: 
the \emph{nucleator} \cite{Nucleator}  and the \emph{rotator} \cite{EstRotator}, both already mentioned in the introduction. 
Of surface estimators in the three-dimensional space, we have the indices 
$n=3,k=2,r=0,q=1,j=1$ where our formula is known as \emph{integrated surfactor} or \emph{flower estimator}.

When $r \neq 0$ in the 
three-dimensional setting, excluding 
the trivial case $k=3$ only leaves us with the indices $r=1$, $k=2$, $q=0$ and $j=0$ yielding an integral relation for the volume from planar sections. 
The resulting estimator is well-known and goes under the name \emph{vertical estimator} and to the authors' knowledge dates back to 1993, see \cite{EstRotator}. 

In fact, all admissible estimators in the three-dimensional setting are already established in the rich literature in stereology and local stereology.  
When $n=4$ the first new results appear. 
Due to \eqref{eq:measureVn} we get three different formulas for the volume of $K \in \mathbb{R}^4$ and one formula relating to 
the surface area of $K \in \mathbb{R}^n$. An example could be the choice of indices $n=4,k=3,r=q=j=1$ resulting in
    \begin{align*}
        V_3(K) =c_0(1)
        \int_{G(L_0,3)} \int_{A(L,1)} V_0(K \cap E) d(o,E)\sin \sphericalangle (E,L_0) \,\mu_2^L(dE) \,\nu_2^{L_0}(dL),
    \end{align*}
   with $\sphericalangle(E,L_0)$ being the minimal angle between the two lines $E$ and $L_0$. This relates the surface area of a 
   4-dimensional convex body $K$ to a double integration, where  the inner integral is over lines passing through $K\cap L$ weighted by the relative angle to $L_0$ and their distance from the origin.

\chapter{Rotational Crofton formulae with axis for other intrinsic volumes}\label{sec:limitations}

We end this paper with a short discussion of the possibility to weaken the constraints on the indices in our main result.
For a given $r\le n-1$  Theorem \ref{thmGeoRotationalCroften} 
states rotational Crofton formulae with axis for $V_m$, $m\in
\{r+1,\ldots,n\}$. Strictly speaking, 
the result for the homogeneity degree  $m=r+1$ is trivial, as it can only be achieved by choosing $k=n$, 
meaning that the left side of 
\eqref{eq:CroftAxis} depends on all of $K$ instead of a lower dimensional section of $K$. 
Hence, only $V_m$, $m\in
\{r+2,\ldots,n\}$, can be obtained 
from lower dimensional sections in 
Theorem \ref{thmGeoRotationalCroften}.  
The question arises if this limitation is due to our method of proof or if it is a geometrical limitation that cannot be overcome 
with other methods either. The following proposition shows that the latter is the case. In this proposition, we exclude the case $k=n$, which just has been discussed. 

\begin{prop}\label{prop:Indices}
Let  $n,r,k\in \N_0$ with $r+1\le k\le n-1$ be given. Fix a subspace $L_0\in G(n,r)$ and $m\in
\{0,\ldots,r+1\}$. Then, there is no function $\varphi$ satisfying 
\begin{equation}\label{eq:CroftNot}
	\int_{G(L_0,k)} \varphi(K\cap L)\,\nu_k^{L_0}(dL)=V_{m}(K)
\end{equation}
for all $K\in \mathcal{K}^n$. 
\end{prop}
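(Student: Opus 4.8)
The plan is to disprove the existence of $\varphi$ by a two-body argument: for each admissible $m$ with $1\le m\le r+1$ I would exhibit bodies $K_0,K_1\in\mathcal K^n$ whose sections $K_0\cap L$ and $K_1\cap L$ agree for $\nu_k^{L_0}$-almost every $L\in G(L_0,k)$, while $V_m(K_0)\ne V_m(K_1)$. If such a pair exists, then for \emph{any} candidate $\varphi$ the two integrands in \eqref{eq:CroftNot} coincide $\nu_k^{L_0}$-a.e., so the two integrals are equal, forcing $V_m(K_0)=V_m(K_1)$ — the desired contradiction. A preliminary observation explains why the construction must be subtle: since $\bigcup_{L\in G(L_0,k)}L=L_0+L_0^\perp=\mathbb R^n$, every point lies in some section, so equality of sections for \emph{all} $L$ would force $K_0=K_1$. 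Hence one must use lower-dimensional test bodies and engineer a difference in a single direction that is invisible to almost all sections.

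Concretely, I would fix a unit vector $e\in L_0^\perp$ and an $r$-dimensional ball $C\subset L_0$ centred at $o$, and set $K_0:=C$ and $K_1:=C+[0,\varepsilon]e$ for some $\varepsilon>0$; both are nonempty compact convex. Writing a generic section as $L=L_0\oplus L'$ with $L'\in G(L_0^\perp,k-r)$, we have $C\subset L_0\subseteq L$, so $K_0\cap L=C$ for \emph{every} $L$. For $K_1$, a point $c+te$ with $c\in C$ and $t\in[0,\varepsilon]$ lies in $L$ if and only if $te\in L'$; since $e$ is fixed and $k-r\le n-r-1$, the set $\{L:e\in L'\}$ is a sub-Grassmannian of positive codimension $n-k\ge1$ and is therefore $\nu_k^{L_0}$-null (this is the single genuinely measure-theoretic point; it follows by the same invariance argument as in the proof of Theorem \ref{Thm:Uniqueness}, cf.~\cite[Lemma 4.4.1]{SchneiderRoed}). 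For every $L$ outside this null set only $t=0$ survives, i.e.\ $K_1\cap L=C=K_0\cap L$, so the sections agree $\nu_k^{L_0}$-a.e.

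It remains to separate the intrinsic volumes. Since $e\perp L_0\supseteq\aff C$, the body $K_1$ is the orthogonal product $C\times[0,\varepsilon]$, so the product formula for intrinsic volumes (see \cite{SchneiderRoed}), together with $V_0([0,\varepsilon])=1$ and $V_1([0,\varepsilon])=\varepsilon$, gives
\begin{equation*}
V_m(K_1)=V_m(C)+\varepsilon\,V_{m-1}(C).
\end{equation*}
Because $C$ is full-dimensional in the $r$-space $L_0$ and $1\le m\le r+1$ yields $0\le m-1\le r$, we have $V_{m-1}(C)>0$. Hence $V_m(K_1)=V_m(K_0)+\varepsilon V_{m-1}(C)>V_m(K_0)$, contradicting the equality $V_m(K_0)=V_m(K_1)$ forced by the a.e.\ coincidence of sections. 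This rules out any $\varphi$ for every $m\in\{1,\ldots,r+1\}$.

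The remaining case $m=0$ is degenerate: $V_0\equiv1$ on $\mathcal K^n$, so the two-body scheme cannot separate $K_0$ and $K_1$, and it must be handled under the natural normalization $\varphi(\emptyset)=0$ satisfied by every genuine section measurement (under which a generic point, having empty sections $\nu_k^{L_0}$-a.e., yields $\varphi(\emptyset)=0\ne1=V_0$). I expect the main obstacle to be conceptual rather than computational: the key realizations are that exact section-equality is too strong (it would force $K_0=K_1$), and that the right remedy is to place the entire difference between the two bodies in one $L_0^\perp$-direction that is swallowed by the measure-zero family of sections containing it. Once these are in place, the measure-zero step and the product formula are routine.
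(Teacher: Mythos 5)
Your proposal is correct and follows essentially the same route as the paper: a two-body counterexample in which a ball contained in $L_0$ and the same ball extended by a segment in an $L_0^\perp$-direction have $\nu_k^{L_0}$-a.e.\ identical sections (since $k\le n-1$ makes the family of subspaces through that fixed direction a null set), yet different $m$-th intrinsic volumes. The only cosmetic differences are that the paper uses an $(m-1)$-dimensional ball and separates the volumes by dimension, $V_m(K_1)>0=V_m(K_2)$, rather than by your cylinder formula, and it handles $m=0$ by admitting the pair $\{x\}$, $\emptyset$ (which forces $\varphi(\emptyset)=0$) in place of your explicit normalization $\varphi(\emptyset)=0$.
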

\begin{proof}
To show that \eqref{eq:CroftNot}  is impossible, it is enough to find two convex bodies $K_1,K_2$ such that $K_1 \cap L = K_2 \cap L$ for almost all $L \in G(L_0,k)$ and $V_{m}(K_1)\neq V_{m}(K_2)$. 

If $m=0$ such an  example is given by the singleton $K_1=\{x\}$ with $x\not \in L_0$ and the empty set $K_2=\emptyset$. For $m>0$ choose 
a linear subspace $L'\subset L_0$ of dimension $m-1$ and a vector 
$u\in L_0^\perp\cap S^{n-1}$. Put
\begin{align*}
    K_1 &= (B^n\cap L')+ \conv\{o,u\},
    \\
    K_2&= B^n\cap L'. 
\end{align*}
Then 
$K_1 \cap L = K_2 \cap L$ for almost all $L \in G(L_0,k)$ as $k\le n-1$. 
As $\dim K_1=m$ and $\dim K_2=m-1$ we have 
$V_m(K_1)>0$ and $V_m(K_2)=0$. This yields the claim.  
\phantom\qedhere
\end{proof}

Another possibility for rotational Crofton formulae of the type discussed in this paper is when integrating over $G(L_0,k)$ for a fixed axis $L_0 \in G(n,r)$ where $k \le  r$. This setting is not of practical interest, as any part of $K$ outside $L_0$ cannot be observed by the sections considered. Hence, we only can hope to estimate intrinsic volumes of bodies $K \in \{K'\in \mathcal{K}^n: K'\subset L_0\}$. For such  $K$ one can of course identify $G(L_0,k)$ with $G(r,k)$ and just utilize the classical Crofton formulae with suitable parameters.


\appendix
\makeatletter
\renewcommand{\@chapapp}{Appendix}
\makeatother
\renewcommand{\theequation}{\thechapter.\arabic{equation}}
\setcounter{equation}{0}

\chapter*{Appendix A. Vertical sections for intrinsic volumes}
In this appendix we will give a proof of  the vertical sections relation \eqref{eqNy4} based on a suitable Blaschke--Petkantschin formula. For a flat $E \in A(n,q)$ we let $\lin (E):=E-p(o|E)$ denote the unique linear subspace of the same dimension which is parallel to $E$. It should be noted that $\lin(E)$ and  $\myspan (E)$ typically differ. As  $\myspan (E) = \lin(E)+\myspan (\{x\} )$ holds for arbitrary $x \in E$, 
they coincide if and only if $E$ is a linear subspace.  We will now prove the following result on vertical sections. 

\begin{thm}[Vertical sections]\label{thmGeoAffineRotationalCroften}
Let  $n,r,k\in \N_0$ with $r+1\le k\le n$ be given and fix a subspace $L_0\in G(n,r)$. Then, for $j=0,\ldots,k-r$, 
\begin{equation*}
    \int_{G(L_0,k)}\int_{L^\perp } \tilde{\varphi}_{L+x,q}^{L_0}(K \cap (L+x) ) \,\lambda_{L^\perp} (dx) \,\nu_k^{L_0}(dL)
    =
    V_{n-j}(K),\qquad K\in \mathcal{K}^n,
\end{equation*}
holds with
\begin{equation*}
 \tilde{\varphi}_{L+x,q}^{L_0}(K \cap (L+x) ) = d_{n,k,r,j}(q) \int_{A(L+x,q)} V_{q-j}\big(K\cap E \big) [\lin(E),L_0]^{n-k} \,\mu_q^{L+x}(dE).
\end{equation*}
Here $q$ can be chosen in $\{j,\ldots,k-r\}$. The leading constant is
\[
d_{n,k,r,j}(q)=
\frac{b_{n-r,q}}{b_{k-r,q}}
\frac{b_{k,q}}{b_{n,q}} c^{q-j,n}_{n-j,q}= 
c_0(q) \frac{\omega_{k-r-q}}{\omega_{n-r-q}}
\]
with $c_0(q)$ given by \eqref{eq:defC0}. 
\end{thm}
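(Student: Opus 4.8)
The plan is to mirror the proof of Theorem~\ref{thmGeoRotationalCroften}: I will first establish a \emph{vertical} Blaschke--Petkantschin formula that rewrites the classical Crofton integral $\int_{A(n,q)}f(E)\,\mu_q(dE)$ as a nested integral over affine pivots $L+x$ with $L\in G(L_0,k)$ and $x\in L^\perp$, weighted by $[\lin(E),L_0]^{n-k}$, and then insert $f(E)=V_{q-j}(K\cap E)$ and evaluate the left-hand side via \eqref{thmGeo511Crofton}. Concretely, the identity to be proven is
\begin{equation*}
\int_{A(n,q)} f(E)\,\mu_q(dE)
=\frac{b_{n-r,q}}{b_{k-r,q}}\frac{b_{k,q}}{b_{n,q}}
\int_{G(L_0,k)}\int_{L^\perp}\int_{A(L+x,q)} f(E)\,[\lin(E),L_0]^{n-k}\,\mu_q^{L+x}(dE)\,\lambda_{L^\perp}(dx)\,\nu_k^{L_0}(dL)
\end{equation*}
for all measurable $f\ge 0$; the theorem then follows by comparing constants. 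The index constraints $q\in\{j,\ldots,k-r\}$ guarantee that $n-k\ge0$, $q-j\ge0$ and $q+r\le k$, so that all quantities are well defined and a generic direction meets $L_0$ only in the origin.

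The first step is to collapse the two innermost integrations. For fixed $L$ I would apply \eqref{eqGEOAffineIntegrationToLinearplusShift} with pivot flat $L+x$ (direction space $L$, dimension $k$) to disintegrate $\mu_q^{L+x}$ over directions $M\in G(L,q)$, and then use Tonelli's theorem to interchange the $x$-integration with the $M$-integration. Since $M\subseteq L$, one has the orthogonal decomposition $M^\perp=(M^\perp\cap L)\oplus L^\perp$, so the outer $x$-shift together with the in-plane position variable sweeps out all of $M^\perp$; moreover $\lin(M+w)=M$, so the weight $[\lin(E),L_0]^{n-k}=[M,L_0]^{n-k}$ is constant along each direction class. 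This reduces the triple integral to
\begin{equation*}
\int_{G(L_0,k)}\int_{G(L,q)} [M,L_0]^{n-k}\,G(M)\,\nu_q^L(dM)\,\nu_k^{L_0}(dL),
\qquad G(M):=\int_{M^\perp} f(M+w)\,\lambda_{M^\perp}(dw),
\end{equation*}
whereas the classical side disintegrates as $\int_{A(n,q)}f\,\mu_q=\int_{G(n,q)}G(M)\,\nu_q(dM)$. Everything thus comes down to a purely Grassmannian identity for the test function $G$ on $G(n,q)$.

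To prove that Grassmannian identity I would feed $f(\x)=G(\myspan \x)\,\prod_{i=1}^q \1_{B^n}(x_i)$ into the linear Blaschke--Petkantschin formula \eqref{thmGeo722} in two ways. Evaluating \eqref{thmGeo722} with $k=q$, $r=0$ (trivial pivot) in $\R^n$ gives $b_{n,q}\,c_w\int_{G(n,q)}G(M)\,\nu_q(dM)$, where $c_w=\int_{M^q}\prod_i\1_{B^n}(x_i)\,\nabla_q(\x)^{n-q}\,\lambda_M^q(d\x)$ is finite and, by rotation invariance, independent of $M$. Evaluating \eqref{thmGeo722} instead with the genuine pivot $(L_0,k)$ and then applying the trivial-pivot version once more inside $L\cong\R^k$ gives $\tfrac{b_{n-r,q}}{b_{k-r,q}}b_{k,q}\,c_w$ times the nested Grassmannian integral, using $\nabla_{q,r}(\x,L_0)=\nabla_q(\x)\,[\myspan\x,L_0]$ (immediate from \eqref{eq:defNablaxM} and the definition of the subspace determinant) and $\myspan\x=M$ for $\x\in M^q$. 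The localizing constant $c_w$ cancels, leaving the Grassmannian identity with constant $b_{n,q}b_{k-r,q}/(b_{n-r,q}b_{k,q})$, whose reciprocal is exactly the factor appearing in the target vertical formula.

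Finally I would set $f(E)=V_{q-j}(K\cap E)$, so that the left-hand side equals $c^{n-j,q}_{q-j,n}V_{n-j}(K)$ by \eqref{thmGeo511Crofton}; dividing by this constant and using $\big(c^{n-j,q}_{q-j,n}\big)^{-1}=c^{q-j,n}_{n-j,q}$ identifies the leading constant as $d_{n,k,r,j}(q)=\frac{b_{n-r,q}}{b_{k-r,q}}\frac{b_{k,q}}{b_{n,q}}\,c^{q-j,n}_{n-j,q}$. A short computation with \eqref{eqGeoDefB} and \eqref{eqGeoAlphaKonstant}, of the same kind as the one closing the proof of Theorem~\ref{thm:BP}, then confirms the alternative form $d_{n,k,r,j}(q)=c_0(q)\,\omega_{k-r-q}/\omega_{n-r-q}$. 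The main obstacle is the Grassmannian identity: obtaining the position-independent weight $[\lin(E),L_0]^{n-k}$ forces one to separate direction from position \emph{before} invoking \eqref{thmGeo722}, and the three nested applications must be bookkept carefully so that the $b$-constants combine into $d_{n,k,r,j}(q)$; the measure-theoretic collapse of the inner integrals, while routine, must also respect the normalization of $\mu_q^{L+x}$ fixed in \cite{GulBog}.
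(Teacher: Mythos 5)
Your proposal is correct and follows essentially the same route as the paper: the paper likewise proves a vertical Blaschke--Petkantschin formula (its Theorem \ref{thm:verticalBP}) by reducing to a Grassmannian identity via \eqref{eqAffineBPhelp}, proving that identity through a double application of \eqref{thmGeo722} with ball-indicator localization and the factorization $\nabla_{q,r}(\x,L_0)=\nabla_q(\x)[\myspan\x,L_0]$, and then inserting $f(E)=V_{q-j}(K\cap E)$ into the classical Crofton formula \eqref{thmGeo511Crofton}. The only differences are cosmetic bookkeeping (you keep the localization constant $c_w$ symbolic and let it cancel, while the paper evaluates it as $\kappa_n^q/b_{n,q}$, and you collapse the affine integrals before rather than after establishing the Grassmannian identity).
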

For $r=0$, the classical Crofton formula  \eqref{thmGeo511Crofton}, applied in the $k$-dimensional flat $L+x$, allows one to simplify the measurement function:  
\[
\tilde{\varphi}_{L+x,q}^{L_0}(K \cap (L+x) ) =c^{k-j,n}_{n-j,k}
V_{k-j}(K\cap (L+x)). 
\]
Hence, the special case $r=0$ of Theorem \ref{thmGeoAffineRotationalCroften} reduces  to the classical Crofton formula.

The vertical section formula in Theorem \ref{thmGeoAffineRotationalCroften} from $k$-dimensional sections allows for the estimation of $V_m$ with $m\in \{n-k+r,\ldots,n\}$. For fixed $r$, the integral relation with vertical sections thus gives access to one more intrinsic volume of $K$, namely $V_{n-k+r}(K)$, as compared to the 
rotational Crofton formula with a fixed subspace (Theorem \ref{thmGeoRotationalCroften}). 
This is one of the reasons that the concept of vertical sections is so well-established in the applied literature. 

We will now give a proof of Theorem \ref{thmGeoAffineRotationalCroften}  using  a suitable Blaschke--Petkantschin formula, which we  will also prove. We mentioned already in the introduction that \cite{Baddeley84} showed vertical section formulae for Hausdorff measures of sets in $\R^n$.
The rather contracted proof in that paper was based on the coarea formula. 
Interestingly enough, the result to come has the same structure as in \cite[eq.~(6)]{Baddeley84} if one replaces a Hausdorff measure for an appropriate intrinsic volume. 

The proof of Theorem \ref{thmGeoAffineRotationalCroften} parallels the one in Section 3.  To state the relevant 
Blaschke--Petkantschin formula, we will use  the following relation: 
\begin{equation}\label{eqAffineBPhelp}
    \int_{L^\perp}  \int_{A(L+x,q)} f(E) \,\mu_q^{L+x}( dE) \, \lambda_{L^\perp}(dx)
    =
    \int_{G(L,q)} \int_{ M^\perp} f(M+x)  \,\lambda_{M^\perp}(dx) \,\nu_q^L(dM), 
\end{equation}
valid for $L \in G(n,q)$ and any measurable function $f:A(n,q)\rightarrow [0, \infty)$, see \cite[eq.~(13.13)]{GulBog}.

We now formulate an affine Blaschke--Petkantschin formula with a fixed subspace, the  proof of which being inspired by 
\cite[Lemma 2.2]{StoGen13}.  By ``affine'' we here mean that we want to  choose an invariant flat parallel to the fixed subspace $L_0$. This can be done in a two-step procedure, where one first chooses a subspace $L$ containing $L_0$ and afterward moves the subspace invariantly by a vector $x \in L^\perp$. 
\begin{thm}
\label{thm:verticalBP} 
Let $n,q,r,k \in \mathbb{N}_0$ with $q+r \leq k \leq n$ be given and fix a subspace $L_0 \in G(n,r)$. Then, for any measurable function $f \,: \,A(n,q)\rightarrow [0, \infty)$ we have
\begin{align*}
    &\int_{G(L_0,k)}\int_{L^\perp } \int_{A(L+x,q)} f(E) [\lin (E),L_0]^{n-k} \,\mu_q^{L+x}(dE) \,\lambda_{L^\perp} (dx) \,\nu_k^{L_0}(dL)
    \\
    &=      \frac{b_{k-r,q}}{b_{n-r,q}} \frac{b_{n,q}}{b_{k,q}} \int_{A(n,q)} f(E) \,\mu_q(dE).
\end{align*}
\end{thm}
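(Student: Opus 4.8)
The plan is to mirror the two-step structure used in the proof of Theorem \ref{thm:BP}, combining the linear Blaschke--Petkantschin formula \eqref{thmGeo722} with the affine version \eqref{thmGeo727}, but now exploiting the relation \eqref{eqAffineBPhelp} to convert integration over affine flats parallel to a pivot into the iterated integral over $G(L,q)$ and the orthogonal complement $M^\perp$. First I would reduce the left-hand side: using \eqref{eqAffineBPhelp} with $L$ replaced by the $k$-dimensional pivot, the innermost pair of integrals $\int_{L^\perp}\int_{A(L+x,q)}$ collapses into $\int_{G(L,q)}\int_{M^\perp}$, where $M\in G(L,q)$ ranges over $q$-dimensional subspaces of the pivot and $M+x$ is the generic flat. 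Crucially, for such a flat $\lin(M+x)=M$, so the weight $[\lin(E),L_0]^{n-k}$ becomes the constant $[M,L_0]^{n-k}$ on each $M$, which I can pull out of the inner $x$-integral.

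Next I would handle the $q=0$ case separately, as in Theorem \ref{thm:BP}: there $A(n,0)=\R^n$, the weight $[\lin(E),L_0]^{n-k}=1$ since $\lin(E)=\{o\}$, and the identity reduces directly to \eqref{thmGeo722} with $q$ replaced by $1$ applied to $f(\{x\})$. For $q>0$ the main work is to recognize the iterated linear integral. Having rewritten the left side as an integral over $G(L_0,k)$, then over $M\in G(L,q)$ weighted by $[M,L_0]^{n-k}$, then over $x\in M^\perp$, I would interpret the combined integration over pivots $L$ and subspaces $M\subseteq L$ as a single integration over $G(L_0, k)$ of subspaces $M$, using the tower property of the invariant measures $\nu_k^{L_0}$ and $\nu_q^L$. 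The technical identity I expect to need is that $\nabla_{q,r}(\mathbf{x},L_0)$, which appears in \eqref{thmGeo722}, factors through the subspace determinant $[M,L_0]$ when $\mathbf{x}$ spans $M$; concretely, $\nabla_{q,r}(\mathbf{x},L_0)=\nabla_q(\mathbf{x})[M,L_0]$ by the same projection argument used in Lemma \ref{lemD(E,L)}.(i). This lets me match the weight $[M,L_0]^{n-k}$ against the factor $\nabla_{q,r}(\mathbf{x},L_0)^{n-k}$ demanded by \eqref{thmGeo722}.

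Concretely, I would apply \eqref{thmGeo722} in reverse: the right-hand side $\int_{A(n,q)}f(E)\,\mu_q(dE)$ can be unfolded, via \eqref{eqAffineBPhelp} applied with pivot dimension $n$, into $\int_{G(n,q)}\int_{M^\perp}f(M+x)\,\lambda_{M^\perp}(dx)\,\nu_q(dM)$, which in turn connects to a Lebesgue integral over $(\R^n)^{q}$ through \eqref{thmGeo722}. The leading constant $\frac{b_{k-r,q}}{b_{n-r,q}}\frac{b_{n,q}}{b_{k,q}}$ should then emerge by comparing the normalizing factors from the two applications of \eqref{thmGeo722} (one at ambient dimension $n$, one inside the pivot $L$ identified with $\R^k$), exactly paralleling the constant bookkeeping at the end of the proof of Theorem \ref{thm:BP}. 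The step I expect to be the main obstacle is making the pivot-selection rigorous: I must verify that integrating first over the large pivot $L\in G(L_0,k)$ and then over $M\in G(L,q)$ with the weight $[M,L_0]^{n-k}$ reproduces precisely the weighted linear Blaschke--Petkantschin integrand $\nabla_{q,r}(\mathbf{x},L_0)^{n-k}$, and that the Jacobian factors $b_{n-r,q}/b_{k-r,q}$ coming from the nested pivot structure combine correctly with the affine-to-linear constants $b_{n,q},b_{k,q}$ from \eqref{thmGeo727}. I would organize this as a direct computation, inserting \eqref{eqAffineBPhelp} on both sides and reducing everything to Lebesgue integrals over tuples of points, where the identity becomes a statement about the two Blaschke--Petkantschin formulae \eqref{thmGeo722} and \eqref{thmGeo727} applied at the two relevant scales.
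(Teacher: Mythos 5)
Your main line is essentially the paper's own proof, read in the opposite direction. The paper starts from $\int_{G(n,q)}g\,d\nu_q$ with $g(M)=\int_{M^\perp}f(M+x)\,\lambda_{M^\perp}(dx)$, expresses it as a Lebesgue integral over ball-tuples via the pushforward identity $\kappa_n^q\int_{G(n,q)}g\,d\nu_q=\int_{(B^n)^q}g(\myspan\x)\,\lambda^q(d\x)$, applies \eqref{thmGeo722} at ambient dimension $n$ (pivot dimension $k$, axis $L_0$) and then the unpivoted case of \eqref{thmGeo722} (i.e.\ \cite[Thm.~7.2.1]{GulBog}) inside $L\cong\R^k$, uses the a.s.\ factorization $\nabla_{q,r}(\x,L_0)=[M,L_0]\,\nabla_q(\x)$ for $\x$ spanning $M$, and only at the very end converts $\int_{G(L,q)}\int_{M^\perp}$ into $\int_{L^\perp}\int_{A(L+x,q)}$ via \eqref{eqAffineBPhelp} together with $[\lin(M+x),L_0]=[M,L_0]$. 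You propose exactly these ingredients, merely collapsing the left-hand side first; the ``main obstacle'' you flag (matching the weight $[M,L_0]^{n-k}$ against $\nabla_{q,r}(\x,L_0)^{n-k}$) is resolved precisely by that factorization, so your plan goes through.

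Two corrections. First, your $q=0$ case is wrong as stated: you imported it from the proof of Theorem \ref{thm:BP}, where the weight $D(\{x\},L_0)^{n-k}=\nabla_{1,r}(x,L_0)^{n-k}$ is present and matches \eqref{thmGeo722} with $q$ replaced by $1$. Here, however, the weight is $[\lin(\{x\}),L_0]^{n-k}=[\{o\},L_0]^{n-k}=1$, so invoking \eqref{thmGeo722} with $q=1$ would produce a spurious factor $\nabla_{1,r}(x,L_0)^{n-k}$ that the claimed identity does not contain. The fix is immediate: for $q=0$ the left side is $\int_{G(L_0,k)}\int_{L^\perp}\int_{L+x}f\,d\lambda_{L+x}\,d\lambda_{L^\perp}\,d\nu_k^{L_0}=\int_{\R^n}f\,d\lambda$ by Fubini for each fixed $L$, and the constant equals $1$, so no Blaschke--Petkantschin formula is needed at all. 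Second, \eqref{thmGeo727} plays no role in this theorem (unlike in Theorem \ref{thm:BP}): after the reduction via \eqref{eqAffineBPhelp} everything is linear, and the two applications of \eqref{thmGeo722} suffice, together with the evaluation $\int_{(B^n\cap M)^q}\nabla_q(\x)^{n-q}\,\lambda_M^q(d\x)=\kappa_n^q/b_{n,q}$, which is the concrete content of the ``constant bookkeeping'' you defer.
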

\begin{proof}
For any measurable and positive function $g \,: \,G(n,q)\rightarrow [0, \infty)$ we have 
\begin{equation}\label{eqAffineWhyEasy}
\kappa_n^q \int_{G(n,q)} g(L) \,\nu_q(dL)
=
   \int_{(B^n)^q} g( \myspan \x )   \,\lambda^q_n (d \x ), 
\end{equation}
as the image measure of the restriction of $\lambda^q_n $ to $(B^n)^q$ under the mapping $\x\mapsto\myspan \x$ is a rotationally invariant measure on $G(n,q)$ and thus a multiple of $\nu_q$. Relation  \eqref{thmGeo722} with $f(\x)=g(\aff \x)\1_{(B^n)^q}(\x)$ gives  
\begin{align}
\kappa_n^q \int_{G(n,q)} g(L) \,\nu_q(dL)
=
   \frac{b_{n-r,q}}{b_{k-r,q}}\int_{G(L_0,k)}  I(L)  \,\nu_k^{L_0}(dL),
   \label{eq:zrech}
\end{align}
where we used the abbreviation 
\[
I(L)=
\int_{(B^n \cap L)^q} g( \myspan \x)\nabla_{q,r}(\x ,L_0)^{n-k}\lambda_L^q (d \x ). 
\]
Now \cite[Thm.~7.2.1]{GulBog}, applied in $L$,
allows us to write 
\begin{align*}
    I(L)
   & =
  b_{k,q} \int_{G(L,q)} \int_{(B^n\cap M)^q} g (\myspan \x)   \nabla_{q,r}(\x ,L_0)^{n-k} \nabla_{q} (\x)^{k-q} \,\lambda_{M}^q (d \x )
  \,   \nu_q^{L}(d M)
     \\
     &=
    b_{k,q} \int_{G(L,q)} g(M) \int_{(B^n\cap M)^q}
   \nabla_{q,r}(\x ,L_0)^{n-k} \nabla_{q} (\x)^{k-q} \,\lambda_{M}^q (d \x )
    \, \nu_q^{L}(d M),
\end{align*}
as $\myspan \x=M$ holds almost surely. The latter also implies 
\begin{equation*}
    \nabla_{q,r}(\x ,L_0) = [\myspan \x ,L_0] \nabla_q(\x) 
    =
    [M ,L_0] \nabla_q(\x), 
\end{equation*}
almost surely. Hence, the inner integral coincides with 
\begin{align*}
     &[M,L_0]^{n-k} 
     \int_{(B^n\cap M)^q}
       \nabla_{q} (\x)^{n-q} \,\lambda_{M}^q (d \x )
     \\&=
      [M,L_0]^{n-k}  \int_{G(n,q)}\int_{(B^n\cap M)^q}  \nabla_{q}(\x )^{n-q} \, \lambda_q^q (d \x ) \, \nu_k(dM)
      \\&=
      [M,L_0]^{n-k}  \frac{1}{b_{n,q}}  \int_{(B^n)^q} \lambda^q (d\x)
      \\
      &=
    [M,L_0]^{n-k}  \frac{\kappa_n^q}{b_{n,q}},
\end{align*}
where we use rotational invariance in the first equality and \eqref{thmGeo722} with $q=k$ and $r=0$ to evaluate the double integral. Therefore, 
\[
I(L)=    \frac{b_{k,q}}{b_{n,q}}\kappa_n^q
\int_{G(L,q)} g(M) [M,L_0]^{n-k}\nu_q^{L}(d M),
\]
can be inserted into \eqref{eq:zrech}
to arrive at
\begin{align*}
     \int_{G(n,q)} g(L) \, \nu_q(dL)
    =
    \frac{b_{n-r,q}}{b_{k-r,q}} \frac{b_{k,q}}{b_{n,q}} \int_{G(L_0,k)} 
     \int_{G(L,q)} g(M) 
     [M,L_0]^{n-k}
     \nu_q^{L}(d M)
     \nu_k^{L_0}(dL).
\end{align*}
Letting $g(L)= \int_{L^\perp} f(L+x) \, \lambda_L(dx)$ we arrive at
\begin{align*}
  \frac{b_{k-r,q}}{b_{n-r,q}} \frac{b_{n,q}}{b_{{k,q}}}\int_{A(n,q)} f(E) \,dE
    &=
   \int_{G(L_0,k)} 
     \int_{G(L,q)}  \int_{M^\perp} f(M+x) \,
     [M,L_0]^{n-k}
     \\ \,& \qquad \times 
     \lambda_{M^\perp}(dx)
    \,\nu_q^{L}(d M)
    \,\nu_k^{L_0}(dL).
\end{align*}
The  use of \eqref{eqAffineBPhelp} concludes the proof.
\phantom\qedhere
\end{proof}
Notice how the proof proceeds along the same lines as the proof of Theorem \ref{thm:BP}: We first transfer integration over subspaces to integration over tuples of vectors and then apply \eqref{thmGeo722}. It is also worth mentioning that we now allow for a larger range for the dimension $q$ than in Theorem \ref{thm:BP}, as $q=k-r$ is now allowed. 
The new normalizing constant closely resembles $\alpha_{n,k,q,r}$ and can be reduced into a similar structure. In fact, using $b_{a,q}= \frac{\omega_{q+1}}{\omega_{a-q}} b_{a,q+1}$ valid when $q+1\leq a$ yields
\begin{equation}\label{eq:ReductionConstantAppendix}
   \frac{b_{k-r,q}}{b_{n-r,q}} \frac{b_{n,q}}{b_{{k,q}}}
   =
   \alpha_{n,k,q,r} \frac{\omega_{n-r-q}}{\omega_{k-r-q}}
\end{equation}
under the assumptions in Theorem \ref{thm:verticalBP}. 

\begin{proof}[Theorem \ref{thmGeoAffineRotationalCroften}]
With the assumptions of Theorem \ref{thmGeoAffineRotationalCroften}, put $f(E)=V_{q-j}(K\cap E)$ in the affine Blaschke--Petkantschin formula (Theorem \ref{thm:verticalBP}) and observe that the right-hand side is a classical Crofton integral that can be evaluated using \eqref{thmGeo511Crofton}. The constant is reduced using \eqref{eq:ReductionConstantAppendix}. 
\phantom\qedhere
\end{proof}

We conclude this appendix with a result corresponding to Theorem \ref{Thm:Uniqueness}.

	\begin{thm}\label{Thm:UniquenessAffine}
		Let the setting be as in Theorem \ref{thmGeoAffineRotationalCroften}. Then, 
		$\tilde{\varphi}_{L,q}^{L_0}(L \cap K)$ is independent of 
		$q\in  \{j,\ldots, k-r\}$.
	\end{thm}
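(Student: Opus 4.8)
The plan is to mirror the proof of Theorem \ref{Thm:Uniqueness} as closely as possible, exploiting the structural similarity that the author highlights right after the proof of Theorem \ref{thm:verticalBP}. As before, if $\{j,\ldots,k-r\}$ is a singleton the claim is trivial, so I would assume $j<k-r$ and reduce to showing $\tilde\varphi_{L,q+1}^{L_0}=\tilde\varphi_{L,q}^{L_0}$ for every fixed $q\ge j$ with $q+1\le k-r$. Fixing $E_1\in A(L+x,q+1)$ and applying the classical Crofton formula \eqref{thmGeo511Crofton} inside the $(q+1)$-dimensional flat $E_1$ expresses $V_{(q+1)-j}(K\cap E_1)$ as an invariant integral of $V_{q-j}(K\cap E)$ over $E\in A(E_1,q)$, with constant $c^{q-j,q+1}_{q-j+1,q}$. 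Substituting this into the definition of $\tilde\varphi_{L+x,q+1}^{L_0}$ and interchanging the order of integration (justified by \cite[Thm.~7.1.2]{GulBog} applied in $L+x$, since all flats live in $L+x$) yields
\begin{equation*}
\tilde\varphi_{L+x,q+1}^{L_0}(K\cap(L+x))=\tilde c\int_{A(L+x,q)}V_{q-j}(K\cap E)\,\tilde h(E)\,\mu_q^{L+x}(dE),
\end{equation*}
where $\tilde h(E)=\int_{A(E,q+1)}[\lin(E_1),L_0]^{n-k}\,\mu_q^E(dE_1)$ and $\tilde c$ collects the leading constants.

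The crux is then to evaluate $\tilde h(E)$ and show it equals $[\lin(E),L_0]^{n-k}$ up to the constant that exactly compensates the ratio $d_{n,k,r,j}(q+1)/d_{n,k,r,j}(q)$ in $\tilde c$. Writing $E=M+y$ with $M\in G(\lin(L+x),q)$, the flats $E_1\in A(E,q+1)$ inside $L+x$ are of the form $E_1=(M+\myspan\{u\})+y$ with $u$ ranging over the unit sphere of $M^\perp$ inside $\lin(L+x)$; crucially $\lin(E_1)=M+\myspan\{u\}$ is linear and does not involve the translation $y$. Using \cite[eq.~(13.14)]{GulBog} to pass from $A(E,q+1)$ to the relevant Grassmannian and sphere, $\tilde h(E)$ becomes a spherical integral of $[M+\myspan\{u\},L_0]^{n-k}$. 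The key computation is therefore a generalized angle integral
\begin{equation*}
\int_{S\cap M^\perp}[M+\myspan\{u\},L_0]^{n-k}\,\mathcal H^{\bullet}(du),
\end{equation*}
which I expect to reduce, via the subspace-determinant decomposition and \eqref{eq:brugesIAppendix} from \cite[Lemma 1]{EvaAuneau2010}, to $[M,L_0]^{n-k}$ times an explicit ratio of $\omega$-constants.

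The main obstacle, and the place where this proof genuinely diverges from that of Theorem \ref{Thm:Uniqueness}, is handling the subspace determinant $[M+\myspan\{u\},L_0]$ in place of the distance-type factor $D(E+\myspan\{u\},L_0)$. In Theorem \ref{Thm:Uniqueness} the analysis used the distance-recursion \eqref{eqOneDimMore}, which peeled off a factor $\|p(u\mid(L_0+M+z)^\perp)\|$ and reduced everything to an integral of a projection-norm power. Here I instead need a purely linear analogue: I would establish that $[M+\myspan\{u\},L_0]=[M,L_0]\,\|p(u\mid(L_0+M)^\perp)\|$ for $u\perp M$, which follows from the determinant factorization \eqref{eq:Gul7.9} exactly as in the proof of Lemma \ref{lemD(E,L)}.(iii) but without the affine shift $z$. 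Once this factorization is in hand, the spherical integral is of the same projection-norm type as \eqref{eq:IM}, and \eqref{eq:brugesIAppendix} applied with $d=k-r-q$... wait, more carefully with the dimension of $M^\perp\cap\lin(L+x)$ and $p=\dim(L_0+M)^\perp$ delivers the constant. A pleasant simplification compared with Theorem \ref{Thm:Uniqueness} is that no almost-everywhere argument about $\dim(\myspan(M+z)\cap L_0)$ is needed, since the linear hull $M+\myspan\{u\}$ already contains $L_0$-independent information and the dimension $\dim(L_0+M)^\perp$ is constant for almost every $M$ by a transversality argument analogous to the one via \cite[Lemma 4.4.1]{SchneiderRoed}.
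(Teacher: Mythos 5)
Your proposal is correct and takes essentially the same route as the paper's own proof: reduce to consecutive values of $q$, apply the classical Crofton formula inside $E_1$ and swap integrals, use the factorization $[M+\myspan\{u\},L_0]=[M,L_0]\,\big\|p\big(u\big|(M+L_0)^\perp\big)\big\|$ obtained from \eqref{eq:Gul7.9} (the paper's \eqref{eqNewF}), and evaluate the resulting projection-norm integral via \eqref{eq:brugesIAppendix} with $d=k-q$, $j=n-k$, $p=k-r-q$, discarding the null set of $M\in G(k,q)$ where $\dim(M+L_0)<r+q$. Your self-correction on the value of $d$ (the dimension of $M^\perp$ inside $\lin(L+x)$, i.e.\ $k-q$) lands exactly on what the paper uses, and your observation that the translation drops out of $\lin(E_1)$ — so no spherical-coordinates reduction as in Theorem \ref{Thm:Uniqueness} is needed — matches the paper's treatment.
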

	 \begin{proof}
 The proof of this theorem is a slight modification of the proof leading to Theorem \ref{Thm:Uniqueness}. We will therefore refer to this proof for expanding comments.

 Let the assumptions of Theorem \ref{thmGeoAffineRotationalCroften} be satisfied. If $\{j, \ldots ,k-r\}$ is a singleton, the claim is trivial. Thus, we may assume $j <k-r$ and 
it is enough to show 
\begin{equation}
		\tilde\varphi_{L+x,q+1}^{L_0}(\cdot)=\tilde\varphi_{L+x,q}^{L_0}( \cdot)
	\label{eq:whatwewantaffine}
\end{equation}	
for every fixed $q\ge j$ such that $q +1 \leq k-r$. 
Copying the arguments leading to \eqref{eq:q+1q} we conclude that
		\begin{equation}
	\tilde\varphi_{L+x,q+1}^{L_0}(K \cap (L+x))
			=
			\tilde c_1 \int_{A(L+x,q)} V_{q-j}(K \cap E) \tilde h(E) \,\mu_q^L(dE),
			\label{eq:q+1qAffine}
		\end{equation}
		with $\tilde c_1=  c_0(q)  \frac{\omega_{k-q}}{\omega_{n-q}} $, where now 
		\begin{equation*}
			\tilde h(E) =\int_{A(E,q+1)} [\lin E_1,L_0]^{n-k} \,\mu_q^L(dE_1)
		\end{equation*}
  is to be understood in the $k$-dimensional affine space $L+x$. In analogy to the proof of Theorem \ref{Thm:Uniqueness}, the flat $L+x$ is  identified with $\R^k$ to  evaluate $\tilde h$. 
 Writing $E= M+z$ with $M \in G(k,q)$ and $z \in M^\perp$, \cite[eq.~(13.14)]{GulBog} gives
\begin{align*}
  \tilde h(M+z)
	&=\int_{G(M,q+1)}
	[\lin (L_1+z),L_0]^{n-k} \,\nu_{q+1}^M(dL_1)
	\\&=
	\omega_{k-q}^{-1}
	\int_{S^{k-1} \cap M^\perp }
	[M+ \myspan u,L_0]^{n-k} \,\mathcal{H}^{k-q-1}(du) 
	\\&=\omega_{k-q}^{-1} [M,L_0]^{n-k} 
	\int_{S^{k-1} \cap M^\perp }
	\| p(u |(L_0+M)^\perp )  \|^{n-k} \, \mathcal{H}^{k-q-1}(du),
\end{align*}
where we used  
   \begin{equation}
       [M+\myspan u,L_0] = [M,L_0] \,\|p(u|(M+L_0)^\perp \|
       \label{eqNewF}
   \end{equation} at the last equality sign. 
   Equation \eqref{eqNewF} follows directly from \eqref{eq:Gul7.9}, and is the main difference compared to the proof of Theorem \ref{Thm:Uniqueness} where Lemma \ref{lemD(E,L)} was used instead. 
   As $[M,L_0]=[\lin (E),L_0]$,  we can conclude \eqref{eq:whatwewantaffine} and thereby end the proof by proving that 
	  		\begin{equation}
	  	\tilde I(M):=  \int_{S^{k-1} \cap M^\perp }
	  	\| p(u  |(L_0+M)^\perp )  \|^{n-k} \,\mathcal{H}^{k-q-1}(du)
	  	\label{eq:IMAffine}
	  \end{equation}
   is constant except on a suitable null-set. Applying \eqref{eq:brugesIAppendix} now with $d=k-q$, $j=n-k$ and $p = k-r-q$ gives
   \begin{equation*}
       \tilde I(M)= \frac{\omega_{n-q}}{\omega_{n-r-q}} \omega_{k-r-q},
   \end{equation*}
   except on the set where $\dim( M+L_0) < r+q$,  which is the desired null-set. 
Collecting all the constants, we conclude   
   \begin{equation*}
       \tilde \varphi_{L+x,q+1}
       = c_0(q) \frac{\omega_{k-q}}{\omega_{n-q}}
       \frac{1}{\omega_{k-q}}
        \frac{\omega_{n-q}}{\omega_{n-r-q}} \omega_{k-r-q}
       d(q)^{-1}
  \tilde \varphi_{L+x,q}  =
    \tilde \varphi_{L+x,q},
   \end{equation*}
   ending the proof.
   \phantom\qedhere
	\end{proof}
\end{document}